\newtheorem{thm}{Theorem}[section]
\newtheorem{lem}[thm]{Lemma}
\theoremstyle{definition}
\newcommand{\kc}{\mathcal{C}}
\newcommand{\specificthanks}[1]{\@fnsymbol{#1}}
\title{On all Pickands Dependence
Functions whose corresponding Extreme-Value-Copulas have \mbox{Spearman $\rho$} (Kendall $\tau$) 
identical to some value $v \in [0,1]$}
\author{Noppadon Kamnitui\thanks{Department for Mathematics, University of Salzburg, Salzburg, Austria, E-mails: noppadon.kam@gmail.com, wolfgang@trutschnig.net}\label{uni-salzburg} \and 
Christian Genest\thanks{Department of Mathematics and Statistics, McGill University, 805, rue Sherbrooke ouest, Montr\'eal (Qu\'ebec), Canada H3A 0B9, E-mail: christian.genest@mcgill.ca} \and
Wolfgang Trutschnig\textsuperscript{\specificthanks{1}}}
\begin{document}

\maketitle
\begin{abstract}
We answer an open question posed by the second author at the Salzburg workshop on Dependence Models and Copulas in 2016 concerning the size of the family $\mathcal{A}^\rho_v$ ($\mathcal{A}^\tau_v$) of all Pickands dependence functions $A$ whose
corresponding Extreme-Value-Copulas have Spearman $\rho$ (Kendall $\tau$) equal to some arbitrary, fixed value $v \in [0,1]$. After determining compact sets 
$\Omega^\rho_v, \Omega^\tau_v \subseteq [0,1] \times [\frac{1}{2},1]$ containing the graphs of all 
Pickands dependence functions from the classes $\mathcal{A}^\rho_v$ and $\mathcal{A}^\tau_v$ respectively, we then show that both sets are best possible.        
\end{abstract}

\section{Notation and preliminaries}
Let $\kc$ denote the family of all two-dimensional copulas, and $\kc_{ex}$ the class of all extreme-value copulas. As usual, $d_\infty$ denotes the uniform metric on $\kc$.
$\mathcal{A}$ will denote the set of all Pickands dependence functions (see \cite{TSFS}). Arzela-Ascoli thereom (\cite{Ru}) implies that 
$\mathcal{A}$ is a compact (and convex) subset of the Banach space $C([0,1],\Vert \cdot \Vert_\infty)$.
For every $A \in \mathcal{A}$ the corresponding EVC will be denoted by $C_A$, i.e. 
$$
C_A(x,y)=(xy)^{A\big(\frac{\ln{x}}{\ln{xy}}\big)}.
$$
It is well-known that Spearman's $\rho$ and Kendall's $\tau$ can be calculated as  
\begin{align}
\rho(C_A)&=-3+12\int_{[0,1]}\frac{1}{(1+A(t))^2}d \lambda(t), \\
\tau(C_A)&=1-\int_{0}^{1}\left(1+(1-t)\frac{A'(t)}{A(t)}\right)\left(1-t\frac{A'(t)}{A(t)}\right)d \lambda(t) = \int_{[0,1]} \frac{t(1-t)}{A(t)} dA'(t).
\end{align}
For every $v \in [0,1]$ we will let $\mathcal{A}^\rho_v$ ($\mathcal{A}^\tau_v$) denote the family of all Pickands dependence functions $A$ fulfilling $\rho(C_A)=v$ ($\tau(C_A)=v$). 
Given $A,B \in \mathcal{A}$ we will write $A \succ B$ if $A(x) \geq B(x)$ for all $x \in [0,1]$ with strict inequality in at least one point (and hence on an interval).

In the sequel we will work with the following five types of piecewise linear Pickands dependence functions, see Figure \ref{fig:aux}. \\
(i) Consider $y \in [\frac{1}{2},1]$ and $x \in [1-y,y]$ and let $T_{x,y} \in \mathcal{A}$ be defined by 
\begin{equation*}\label{defTxb}
T_{x,y}(t)=\begin{cases}
-\frac{1-y}{x}\,t+1 & \text{if } t\in[0,x], \\
-\frac{1-y}{1-x}\,(1-t)+1 & \text{if } t\in(x,1]. \\
\end{cases}
\end{equation*}
(ii) For every $y \in [\frac{1}{2},1]$ define $L_y \in \mathcal{A}^s$ by
\begin{equation*}\label{defLy}
L_y(t)=\begin{cases}
1-t & \text{if } t \in [0,1-y], \\
y & \text{if } t \in (1-y,y], \\
t &  \text{if } t \in (y,1]. \\
\end{cases}
\end{equation*}
(iii) For every $x\in[0,\frac{1}{2}]$ and $y\in[\frac{1}{2},1]$ define $P_{x,y}\in\mathcal{A}$ by
\begin{equation*}
P_{x,y}(t)=
\begin{cases}
1-t & \text{if } t \in [0,x], \\
\frac{1-x-y}{x-y}(t-y)+y & \text{if } t \in (x,y], \\
t & \text{if } t \in (y,1].
\end{cases}
\end{equation*}
(iv) For every $x\in(0,\frac{1}{2}]$, $y\in[1-x,1]$ and let $Z_{x,y}\in\mathcal{A}^s$ be defined by
\begin{equation*}
Z_{x,y}(t)=\begin{cases}
-\frac{1-y}{x}t+1 & \text{if }t\in[0,x], \\
y & \text{if }t\in(x,1-x], \\
-\frac{1-y}{x}(1-t)+1 & \text{if }t\in(1-x,1]. \\
\end{cases}
\end{equation*}
(v) For every $x\in[0,\frac{1}{2})$ and $y\in[\frac{1}{2},1-x]$ define $W_{x,y}\in\mathcal{A}^s$ by
\begin{equation*}
W_{x,y}(t)=\begin{cases}
1-t & \text{if }t \in [0,x], \\
\frac{y-1+x}{\frac{1}{2}-x}(t-\frac{1}{2})+y & \text{if }t \in (x,\frac{1}{2}], \\
-\frac{y-1+x}{\frac{1}{2}-x}(t-\frac{1}{2})+y & \text{if }t \in (\frac{1}{2},1-x], \\
t & \text{if }t \in (1-x,1]. \\
\end{cases}
\end{equation*}

\begin{figure}[h!]
	\begin{center}
		\includegraphics[width = 14cm]{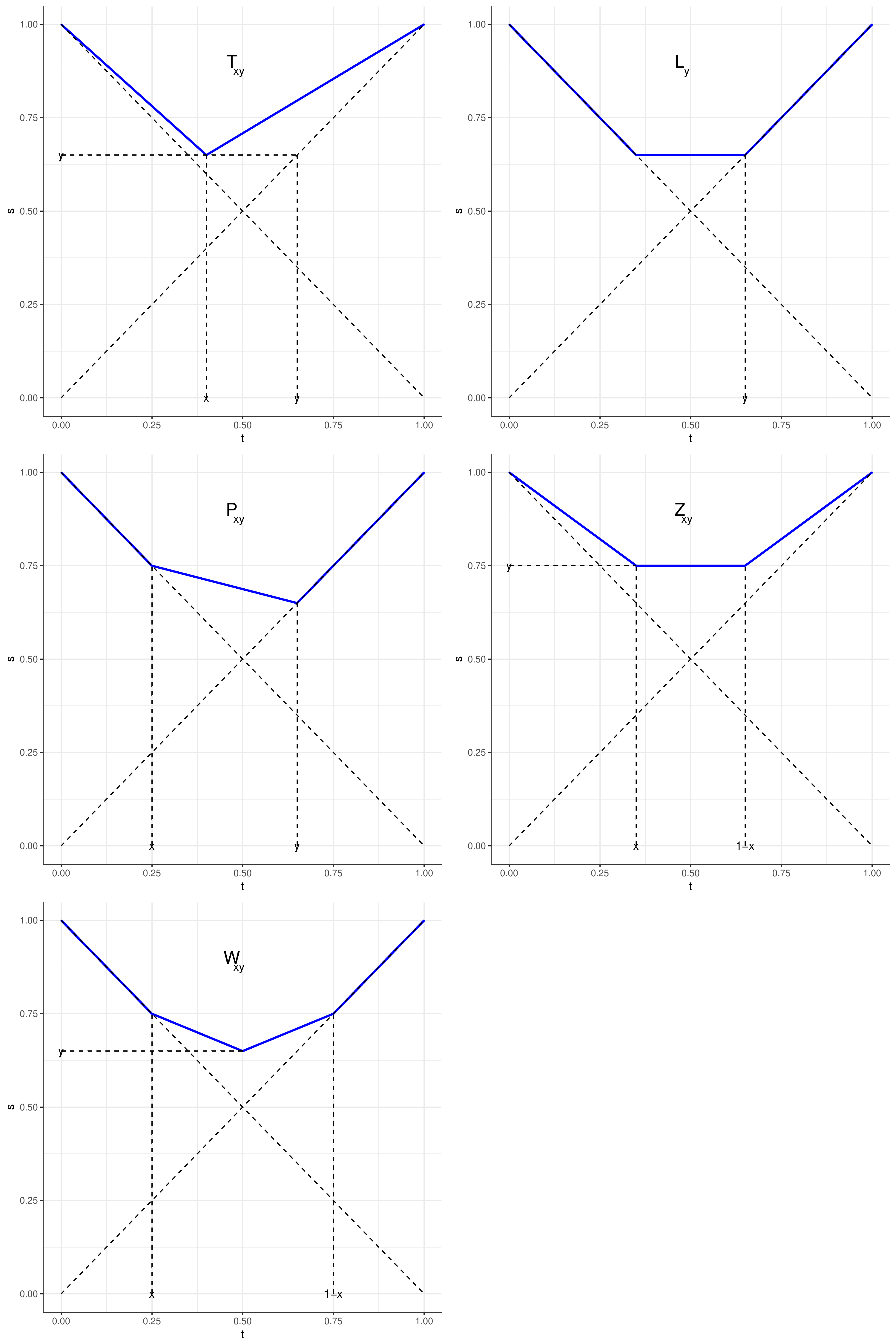}
		\caption{The five types of auxiliary functions considered.}\label{fig:aux}
	\end{center}
\end{figure} 

\clearpage
\section{Spearman's $\rho$}\label{sec:rho.full}

A straightforward calculation yields $\int_{[0,1]}\frac{1}{(1+T_{x,y}(t))^2}d \lambda(t) = \frac{1}{2(1+y)}$. Hence, defining $\varphi_1:[\frac{1}{2},1] \rightarrow [0,1]$ by
$$
\varphi_1(y)=-3 + \frac{6}{1+y}
$$
we get
\begin{equation}\label{eqTxb}
\rho(C_{T_{x,y}}) = \varphi_1(y)
\end{equation}
for every $T_{x,y}$ with $x \in [1-y,y]$, i.e. $\rho(C_{T_{x,y}})$ does not depend on $x \in [1-y,y]$. Obviously $\varphi_1$ is a strictly decreasing homeomorphism mapping $[\frac{1}{2},1]$ onto 
$[0,1]$. Furthermore, letting $\varphi_1^{-1}(\rho)=\frac{3-\rho}{3+\rho}$ denote its inverse,  
\begin{equation}\label{eqvarphi1}
T_{x,\varphi_1^{-1}(\rho_0)} \in \mathcal{A}_{\rho_0}
\end{equation}
holds for every $x \in [1-\varphi_1^{-1}(\rho_0),\varphi_1^{-1}(\rho_0)]$. 

A straightforward calculation yields $\int_{[0,1]}\frac{1}{(1+P_{x,y}(t))^2}d \lambda(t) = 
\frac{1-x+xy}{(2-x)(1+y)}$, implying
\begin{equation}\label{eq:Pxy}
\rho(C_{P_{x,y}})= - 3 + 12 \frac{1-x+xy}{(2-x)(1+y)}.
\end{equation}
Considering $L_y=P_{1-y,y}$ we get  
$$
\int_{[0,1]}\frac{1}{(1+L_{y}(t))^2}d \lambda(t) = \frac{y(2-y)}{(1+y)^2}.
$$
and defining $\psi_1:[\frac{1}{2},1] \rightarrow [0,1]$ by
$$
\psi_1(y)=-3 + 12\,\frac{y(2-y)}{(1+y)^2}
$$
we get
\begin{equation}\label{eqLb}
\rho(C_{L_y}) = \psi_1(y)
\end{equation}
for every $y \in [\frac{1}{2},1]$. Notice that for $y_1<y_2$ obviously $L_{y_2} \succ L_{y_1}$, hence $\rho(C_{L_{y_1}})> \rho(C_{L_{y_2}})$, so $\psi_1$ is a strictly decreasing homeomorphism mapping 
$[\frac{1}{2},1]$ onto $[0,1]$.  
Letting $\psi_1^{-1}$ denote its inverse,
\begin{equation}\label{eqpsi1}
L_{\psi_1^{-1}(\rho_0)} \in \mathcal{A}^\rho_{\rho_0}
\end{equation}
holds for every $\rho_0 \in [0,1]$.  

For every $y \in [\frac{1}{2},1]$ and every $x \in [0,\frac{1}{2}]$ obviously 
$\rho(C_{P_{x,y}}) \in [\rho(C_{T_{y,y}}),1]=[\varphi_1(y),1]$. Additionally, for 
$\rho_0 \in [0,1)$, fixed $y \in (\frac{1}{2},1]$ and $0 \leq x_1 < x_2 \leq 1$ we have 
$P_{x_1,y} \succ P_{x_2,y}$, implying that the mapping $x \mapsto \rho(C_{P_{x,y}})$ is an increasing homeomorphism mapping 
$[0,\frac{1}{2}]$ onto $[\varphi_1(y),1]$. 

Again consider $\rho_0 \in [0,1)$. Then for every $y \geq \varphi_1^{-1}(\rho_0) > \frac{1}{2}$ there exists exactly one $x \in [0,\frac{1}{2}]$ with $\rho(C_{P_{x,y}})=\rho_0$. It is easy to verify that this $x$ is given by 
$$
x=\frac{2(-3+\rho_0+3y+\rho_0y)}{-9+\rho_0+15y+\rho_0y}=:h_{\rho_{0}}(y). 
$$
and that $h_{\rho_{0}}(\varphi_1^{-1}(\rho_0))=0$ as well as 
$h_{\rho_{0}}(1)= \frac{2\rho_0}{3+\rho_0}= 1- \varphi_1^{-1}(\rho_0)$ holds.
Since the derivative $h_{\rho_{0}}'$ of $h_{\rho_{0}}$ is given by
$$
h'_{\rho_{0}}(y)=\frac{36(1-\rho_{0})}{(-9+\rho_{0}+15y+\rho_{0}y)^2} 
$$ 
we get $\min_{y\in[\varphi_1^{-1}(\rho_{0}),1]}h_{\rho_{0}}'(y)=h_{\rho_{0}}'(1)=\frac{36(1-\rho_{0})}{(6+2\rho_{0})^2} >0$. 
As direct consequence, $h_{\rho_{0}}$ is strictly increasing and for $\varphi_1^{-1}(\rho_0) \leq \underline{y} < \overline{y} \leq 1$ 
\begin{align}\label{eq:lip.invh1}
h_{\rho_{0}}(\overline{y})-h_{\rho_{0}}(\underline{y}) \geq h_{\rho_{0}}'(1)(\overline{y}-\underline{y}) = \frac{36(1-\rho_{0})}{(6 + 2\rho_{0})^2}\,(\overline{y}-\underline{y})
\end{align} 
This non-contractivity property will be key in the proof of Lemma \ref{upperboundrho}.

Straightforward (but tedious) calculations (see Appendix) show that the upper envelope $U_{\rho_0}$
of the family $(P_{h_{\rho_0}(y),y})_{y \in [\varphi_1^{-1}(\rho_0),1]}$ is given by 
\begin{align}\label{eq:U}
U_{\rho_0}(t)=
\begin{cases}
P_{0,\varphi_1^{-1}(\rho_{0})}(t) & \text{if } t \in \big[0,\frac{3-\rho_{0}}{6+\rho_{0}}\big), \\
\frac{9-\rho_0+4\sqrt{6-2\rho_0-15t(1-t)-\rho_0t(1-t)}}{15+\rho_0} & \text{if } t \in \big[\frac{3-\rho_{0}}{6+\rho_{0}},\frac{3+2\rho_{0}}{6+\rho_{0}}\big], \\
P_{1-\varphi_1^{-1}(\rho_{0}),1}(t) & \text{if } t \in \big(\frac{3+2\rho_{0}}{6+\rho_{0}},1\big],
\end{cases}
\end{align}
The function $U_{\rho_0}$ is symmetric w.r.t. $t=\frac{1}{2}$, convex and continuously differentiable on $(0,1)$.

We now show that $L_{\varphi_1^{-1}(\rho_0)} \leq A \leq U_{\rho_0}$ holds for every $A \in \mathcal{A}^\rho_{\rho_0}$ and $\rho_0 \in [0,1]$ and start with the lower bound.

\begin{lem}
Suppose that $\rho_0 \in [0,1]$. Then every $A \in \mathcal{A}^\rho_{\rho_0}$ fulfills $A \geq L_{\varphi_1^{-1}(\rho_0)}$.
\end{lem}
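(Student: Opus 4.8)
The plan is to reduce the asserted pointwise bound to the single scalar inequality $\min_{t\in[0,1]}A(t)\ge\varphi_1^{-1}(\rho_0)$. First I would record the elementary fact that, setting $y_0:=\varphi_1^{-1}(\rho_0)$, any $A\in\mathcal{A}$ with $\min_t A(t)\ge y_0$ automatically satisfies $A\ge L_{y_0}$: on $[0,1-y_0]$ and on $(y_0,1]$ the function $L_{y_0}$ coincides with $t\mapsto\max\{t,1-t\}$, which is a lower bound for every Pickands dependence function, while on $(1-y_0,y_0]$ we have $L_{y_0}\equiv y_0\le\min_t A(t)\le A$. So it suffices to prove $\min_t A(t)\ge y_0$ for every $A\in\mathcal{A}^\rho_{\rho_0}$.

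Next I would establish the auxiliary bound $\rho(C_A)\ge\varphi_1(\min_t A(t))$ for every $A\in\mathcal{A}$. Let $m:=\min_t A(t)$, attained (by continuity on the compact interval) at some $t_0\in[0,1]$. Because $A(t)\ge\max\{t,1-t\}$, we have $m\ge\frac12$ and $\max\{t_0,1-t_0\}\le m$, i.e.\ $t_0\in[1-m,m]$, so that $T_{t_0,m}$ is a well-defined element of $\mathcal{A}$. Convexity of $A$ together with $A(0)=A(1)=1$ and $A(t_0)=m$ forces the graph of $A$ to lie below the two chords from $(0,1)$ to $(t_0,m)$ and from $(t_0,m)$ to $(1,1)$, i.e.\ $A\le T_{t_0,m}$ on $[0,1]$. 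Since $u\mapsto(1+u)^{-2}$ is decreasing, this gives
\[
\rho(C_A)=-3+12\int_{[0,1]}\frac{d\lambda(t)}{(1+A(t))^2}\;\ge\;-3+12\int_{[0,1]}\frac{d\lambda(t)}{(1+T_{t_0,m}(t))^2}=\rho(C_{T_{t_0,m}})=\varphi_1(m),
\]
where the last equality is \eqref{eqTxb} and uses that $\rho(C_{T_{x,y}})$ is independent of the breakpoint $x$.

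Finally I would chain the two steps. For $A\in\mathcal{A}^\rho_{\rho_0}$ the previous display yields $\rho_0=\rho(C_A)\ge\varphi_1(\min_t A(t))$; applying the strictly decreasing inverse $\varphi_1^{-1}$ gives $\min_t A(t)\ge\varphi_1^{-1}(\rho_0)=y_0$, and then the first step delivers $A\ge L_{y_0}$. The only delicate point is the chord/convexity argument producing $A\le T_{t_0,m}$, together with the check that $T_{t_0,m}\in\mathcal{A}$ for the particular minimiser $t_0$ at hand; both are immediate from $m\in[\frac12,1]$ and $t_0\in[1-m,m]$. Conceptually, the heart of the matter is that among all Pickands dependence functions with prescribed minimum value $m$, the ``tent'' functions $T_{t_0,m}$ are the pointwise largest and, remarkably, all have the same Spearman $\rho$, namely $\varphi_1(m)$.
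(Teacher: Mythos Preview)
Your argument is correct and is essentially the paper's proof recast in direct form: both hinge on the convexity inequality $A\le T_{t_0,A(t_0)}$ together with $\rho(C_{T_{x,y}})=\varphi_1(y)$ and the fact that $\varphi_1$ is strictly decreasing. The only cosmetic difference is that the paper argues by contradiction from a point where $A(t_0)<\varphi_1^{-1}(\rho_0)$, whereas you work with the global minimiser of $A$ and derive $\min A\ge\varphi_1^{-1}(\rho_0)$ directly; your closing remark that the tents are ``pointwise largest among all $A$ with minimum $m$'' is slightly loose (it is really largest among those with $A(t_0)=m$), but this does not affect the proof.
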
 

\begin{proof}
The assertion is trivial for $\rho_0 \in \{0,1\}$, so we may consider $\rho_0 \in (0,1)$. Suppose that $A \in \mathcal{A}^\rho_{\rho_0}$ and 
that there exists some $t_0 \in (1-\varphi_1^{-1}(\rho_0),\varphi_1^{-1}(\rho_0))$ with $A(t_0) < L_{\varphi_1^{-1}(\rho_0)}(t_0)=\varphi_1^{-1}(\rho_0)$.  
Convexity of $A$ together with $A(0)=A(1)=1$ implies $A \leq T_{t_0,A(t_0)}$, from which we immediately get 
$$
\rho(C_A) \geq \rho(C_{T_{t_0,A(t_0)}}) = \varphi_1 \circ A(t_0) > \varphi_1 \circ \varphi_1^{-1}(\rho_0)=\rho_0,
$$ 
a contradiction to $A \in \mathcal{A}^\rho_{\rho_0}$.	
\end{proof}	

\begin{lem}\label{upperboundrho}
Suppose that $\rho_0 \in [0,1]$. Then every $A \in \mathcal{A}^\rho_{\rho_0}$ fulfills $A \leq U_{\rho_0}$.
\end{lem}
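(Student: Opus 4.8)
Throughout assume $\rho_0\in(0,1)$; the cases $\rho_0\in\{0,1\}$ are immediate, since $U_0\equiv 1$ and, as is easily checked, $U_1=\max\{t,1-t\}$ while $\mathcal{A}^\rho_1=\{\max\{t,1-t\}\}$. Both $U_{\rho_0}$ and the functional $A\mapsto\rho(C_A)=-3+12\int_{[0,1]}\frac{1}{(1+A)^2}\,d\lambda$ are invariant under the reflection $A(\cdot)\mapsto A(1-\cdot)$, so it is enough to rule out the existence of $A\in\mathcal{A}^\rho_{\rho_0}$ and $t_0\in(0,\tfrac12]$ with $v_0:=A(t_0)>U_{\rho_0}(t_0)$; observe $v_0>U_{\rho_0}(t_0)\ge\max\{t_0,1-t_0\}=1-t_0$. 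The idea is to slip a member of the two-parameter family $\{P_{x,y}\}$ below such an $A$ and to derive a contradiction from the monotonicity properties collected above.

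First I would fix a supporting line $\ell$ of the convex function $A$ at $t_0$, so that $\ell(t_0)=v_0$ and $\ell\le A$ on $[0,1]$, and set $F(t):=\max\{\ell(t),\,t,\,1-t\}$. From $\ell\le A$ we get $\ell(0)\le1$ and $\ell(1)\le1$, whereas $\ell(t_0)=v_0>\max\{t_0,1-t_0\}$; hence the slope of $\ell$ lies strictly between $-1$ and $1$, so $\ell$ meets $t\mapsto 1-t$ at a unique point $x_\ast\in[0,t_0)$ and $t\mapsto t$ at a unique point $y_\ast\in(t_0,1]$, and one checks $y_\ast>\tfrac12$. Consequently $F$ equals $1-t$ on $[0,x_\ast]$, equals $\ell$ on $[x_\ast,y_\ast]$, and equals $t$ on $[y_\ast,1]$, i.e.\ $F=P_{x_\ast,y_\ast}$ with $x_\ast\in[0,\tfrac12]$ and $y_\ast\in(\tfrac12,1]$. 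By construction $F\le A$ and $F(t_0)=v_0>U_{\rho_0}(t_0)$, so $P_{x_\ast,y_\ast}$ is not dominated by $U_{\rho_0}$.

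The crucial point is the implication: whenever $P_{x,y}$ with $x\in[0,\tfrac12]$, $y\in(\tfrac12,1]$ satisfies $P_{x,y}\not\le U_{\rho_0}$, then $\rho(C_{P_{x,y}})<\rho_0$; I would prove the contrapositive. A direct comparison of the defining segments shows $P_{x,y}\le P_{x,y'}$ pointwise whenever $x\le\tfrac12$ and $y\le y'$; together with $\rho(C_{P_{0,y}})=\varphi_1(y)$ this yields, for every $y<\varphi_1^{-1}(\rho_0)$, that $P_{x,y}\le P_{x,\varphi_1^{-1}(\rho_0)}$ and $\rho(C_{P_{x,\varphi_1^{-1}(\rho_0)}})\ge\rho(C_{P_{0,\varphi_1^{-1}(\rho_0)}})=\rho_0$, so the claim reduces to the case $y\ge\varphi_1^{-1}(\rho_0)$. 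For such $y$ the value $\rho_0$ lies in the range $[\varphi_1(y),1]$ of the increasing homeomorphism $x\mapsto\rho(C_{P_{x,y}})$, whose unique $\rho_0$-preimage is $h_{\rho_0}(y)$; thus $\rho(C_{P_{x,y}})\ge\rho_0$ forces $x\ge h_{\rho_0}(y)$, hence $P_{x,y}\le P_{h_{\rho_0}(y),y}\le U_{\rho_0}$, the last step by the very definition of $U_{\rho_0}$ as the upper envelope of $(P_{h_{\rho_0}(y),y})_{y\in[\varphi_1^{-1}(\rho_0),1]}$. Applying this to $F=P_{x_\ast,y_\ast}$, which is not $\le U_{\rho_0}$, gives $y_\ast\ge\varphi_1^{-1}(\rho_0)$ and then $x_\ast<h_{\rho_0}(y_\ast)$, so $\rho(C_F)=\rho(C_{P_{x_\ast,y_\ast}})<\rho(C_{P_{h_{\rho_0}(y_\ast),y_\ast}})=\rho_0$ by strict monotonicity in the first argument. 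Finally, since $F\le A$,
\[
\rho(C_A)=-3+12\int_{[0,1]}\frac{1}{(1+A)^2}\,d\lambda\;\le\;-3+12\int_{[0,1]}\frac{1}{(1+F)^2}\,d\lambda=\rho(C_F)<\rho_0,
\]
contradicting $A\in\mathcal{A}^\rho_{\rho_0}$.

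The step I expect to cause the most trouble is the third paragraph, where the monotonicity of $P_{x,y}$ in both arguments must be played off against the strict monotonicity of $\rho(C_{P_{x,y}})$ in $x$; an equivalent but more geometric route would instead produce directly some $y$ with $P_{h_{\rho_0}(y),y}\le A$ and $P_{h_{\rho_0}(y),y}(t_0)<A(t_0)$, and there the quantitative non-contractivity estimate $h_{\rho_0}(\overline y)-h_{\rho_0}(\underline y)\ge h_{\rho_0}'(1)(\overline y-\underline y)$ is exactly what guarantees that the curves $P_{h_{\rho_0}(y),y}$, $y\in[\varphi_1^{-1}(\rho_0),1]$, sweep out the region below $U_{\rho_0}$ near $(t_0,U_{\rho_0}(t_0))$, so that one of them slides under the graph of $A$. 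The identification $F=P_{x_\ast,y_\ast}$ in the second paragraph is routine but needs some care with the boundary relations $0\le x_\ast<t_0<y_\ast\le1$ and $y_\ast>\tfrac12$.
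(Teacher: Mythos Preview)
Your argument is correct and noticeably cleaner than the paper's. The paper does \emph{not} take a supporting line at $t_0$; instead it works only with chords of $A$. Starting from the chord through $(0,1)$ and $(t_0,s_0)$ it extracts a first pair $(x_1,y_1)$ with $P_{x_1,y_1}\in\mathcal{A}^\rho_{\rho_0}$, then---if $A$ dips below $P_{x_1,y_1}$ on $(0,t_0)$---replaces that chord by the one through $(x_1,1-x_1)$ and $(t_0,s_0)$ to obtain $(x_2,y_2)$, and so on; the non-contractivity bound \eqref{eq:lip.invh1} is precisely what guarantees $x_{j+1}-x_j$ is bounded below, so that the iteration reaches the terminal value $x^\ast$ in finitely many steps. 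Your use of a supporting line short-circuits all of this: the function $F=\max\{\ell,t,1-t\}=P_{x_\ast,y_\ast}$ is already the largest member of the $P$-family lying below $A$ and passing through $(t_0,v_0)$, so a single comparison with the envelope $U_{\rho_0}$ suffices and the quantitative estimate on $h_{\rho_0}'$ is never needed. What your approach buys is brevity and conceptual transparency; what the paper's approach buys is that it uses only secant information about $A$ (values at $0$, $1$, and points already constructed), never invoking the existence of a subgradient. One small addition worth making explicit is the case $v_0=1$, which forces $A\equiv 1$ and hence $\rho(C_A)=0\neq\rho_0$; in your write-up this is implicitly covered (the construction still yields $F=P_{0,1}\equiv 1$ and $\rho(C_F)=0<\rho_0$), but a one-line remark removes any doubt about the strict slope bounds you use for $\ell$.
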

\begin{proof}
For the extreme cases $\rho_0=0$ and $\rho_0=1$ we get $U_{0}=1$ and $U_1=P_{0,\frac{1}{2}}$ respectively, so the result obviously holds. 
For the rest of the proof we consider $\rho_0 \in (0,1)$.  
Suppose that $A\in\mathcal{A}^\rho_{\rho_{0}}$ and that $s_0:=A(t_{0})>U_{\rho_{0}}(t_{0})$ for some $t_{0}\in[0,1]$. The case $s_0=1$ yields a contradiction immediately, so we assume $s_0 < 1$.   Symmetry of $U_{\rho_0}$ implies that it suffices to consider $t_0 \in (0,\frac{1}{2}]$, continuity of $U_{\rho_0}$ yields $r=\min_{t\in[0,1]}\sqrt{(t-t_0)^2+(U_{\rho_{0}}(t)-s_0)^2}>0$.\\
\noindent Define $f^*$ by $f^*(t)=\frac{s_0-1}{t_0-1}(t-t_0)+s_0$ (increasing green with positive slope in Figure \ref{fig:construction}). Convexity of $A$ 
yields $f^*(t)\leq A(t)$ for $t\leq t_0$ and $f^*(t)\geq A(t)$ for $t\geq t_0$. Let $x^*$ denote the unique point in the interval $(0,1-\varphi_1^{-1}(\rho_0))$ fulfilling $f^*(x^*)=1-x^*$. 
The following observation is key for the rest of the proof: For every $x\in[0,x^*]$, setting 
$y'=h_{\rho_0}^{-1}(x) \in [\varphi_1^{-1}(\rho_0),1]$, defining $f:[x,1] \rightarrow [\varphi_1^{-1}(\rho_0),1]$ by $f(t)=\frac{s_0-1+x}{t_0-x}(t-t_0)+s_0$, and letting $y''$ denote the unique point 
in $[\varphi_1^{-1}(\rho_0),1]$ fulfilling $f(y'')=y''$     
\begin{align}\label{y''y'}
y''-y'\geq\frac{r}{\sqrt{2}}
\end{align}
holds.\\
\noindent \emph{Step 1:} Define $f_1:\mathbb{R} \rightarrow \mathbb{R}$ by $f_{1}(t)=\frac{s_0-1}{t_0}(t-t_0)+s_0$ (green line with negative slope in Figure \ref{fig:construction}). Convexity of $A$ yields $ A(t)\leq f_{1}(t)$ for $t\leq t_0$ and $A(t)\geq f_{1}(t)$ for $t\geq t_0$. Let $y_{1} \in [\varphi_1^{-1}(\rho_0),1]$
denote the unique point fulfilling $f_{1}(y_{1})=y_1$ and set $x_1:=h_{\rho_{0}}(y_1)$. Then $P_{x_1,y_1}\in\mathcal{A}^\rho_{\rho_{0}}$ and $f_{1}(t)\geq P_{x_1,y_1}(t)$ for $t\leq y_1$. Case 1: If 
$x_{1} \geq  x_{*}$, then $A(t) \succ P_{x_1,y_1}(t)$, so $\rho(C_A)<\rho(C_{P_{x_1,y_1}})=\rho_{0}$, contradiction. \\
Case 2: If $x_{1} < x_{*}$ and $A(t)\geq P_{x_1,y_1}(t)$ for all $t \in (0,t_0)$, then 
$A(t) \succ P_{x_1,y_1}(t)$, contradiction. \\
Case 3: If $x_1 < x^* $ and that $A(t)< P_{x_1,y_1}(t)$ holds for some $t<t_0$ we proceed with Step 2. \\
\emph{Step 2:} Define the function $f_2: \mathbb{R} \rightarrow \mathbb{R}$ by $f_{2}(t)=\frac{s_0-1+x_1}{t_0-x_1}(t-t_0)+s_0$ (blue line starting from $(x_1,1-x_1)$ in Figure \ref{fig:construction}). Then $f_{2}(t)\geq P_{x_1,y_1}(t)$ for $t\in[x_1,y_1]$ and convexity of $A$ implies $A(t)\geq f_{2}(t)$ for $t\geq t_0$.
Let $y_2$ denote the unique point with $f_{2}(y_2)=y_2$ and set $x_2=h_{\rho_{0}}(y_2)$. 
Considering $y_2 - y_1 > \frac{r}{\sqrt{2}}$ and using ineq. (\ref{eq:lip.invh1}) we get $x_2-x_1 > \frac{r \, h_{\rho_0}'(1)}{\sqrt{2}} >0$. In case of $x_2 \geq x^*$ jump to the final step, if not, continue in the same manner to construct $x_3,x_4, \ldots, x_i$, where $i$ denote the first integer fulfilling $x_{i+1} \geq x^*$ and $x_i < x^*$ 
(notice that $x^*$ can be reached in finally many steps since 
$x_{j+1}-x_j > \frac{r \, h_{\rho_0}'(1)}{\sqrt{2}}$ holds, Figure \ref{fig:construction} depicts the case $i=4$).  \\
\emph{Final step:} Since $A \geq P_{x_i,y_i}$ implies $A \succ P_{x_i,y_i}$, which directly yields a contradiction, assume that there exists some $t \in (0,t_0)$ with $A(t) <P_{x_i,y_i}(t)$ and set
$y^*=h_{\rho_0}^{-1}(x^*) \in [y_i,y_{i+1}]$.  
Convexity of $A$ implies $A(t)\geq f_{i+1}(t) \geq P_{x*,y*}(t)$ for $t\in [t_0,1]$ as well as 
$A(t) \geq f^*(t) \geq P_{x^*,y^*}(t)$ for $t \in [0,t_0]$. Considering $P_{x^*,y^*}(t_0) < A(t_0)$
we get $A \succ P_{x^*,y^*}$ and $\rho(C_A)<  \rho(C_{P_{x^*,y^*}})=\rho_0$, contradiction.   
\end{proof}

\begin{figure}[h!]
	\begin{center}
		\includegraphics[width = 15cm]{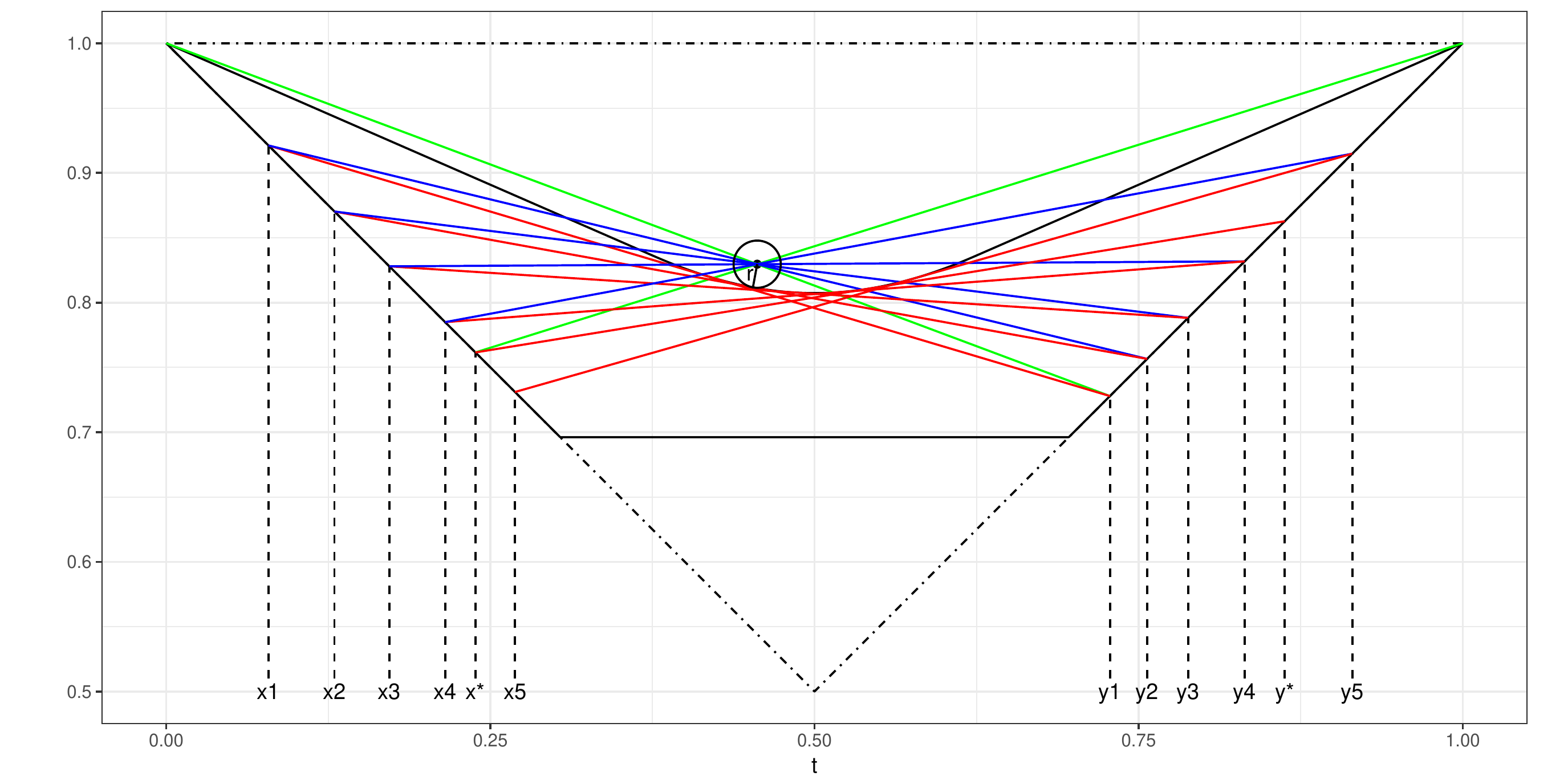}
		\caption{The construction used in the proof of Lemma \ref{upperboundrho}.}\label{fig:construction}
	\end{center}
\end{figure}

\begin{figure}[h!]
	\begin{center}
		\includegraphics[width = 11cm,angle=270]{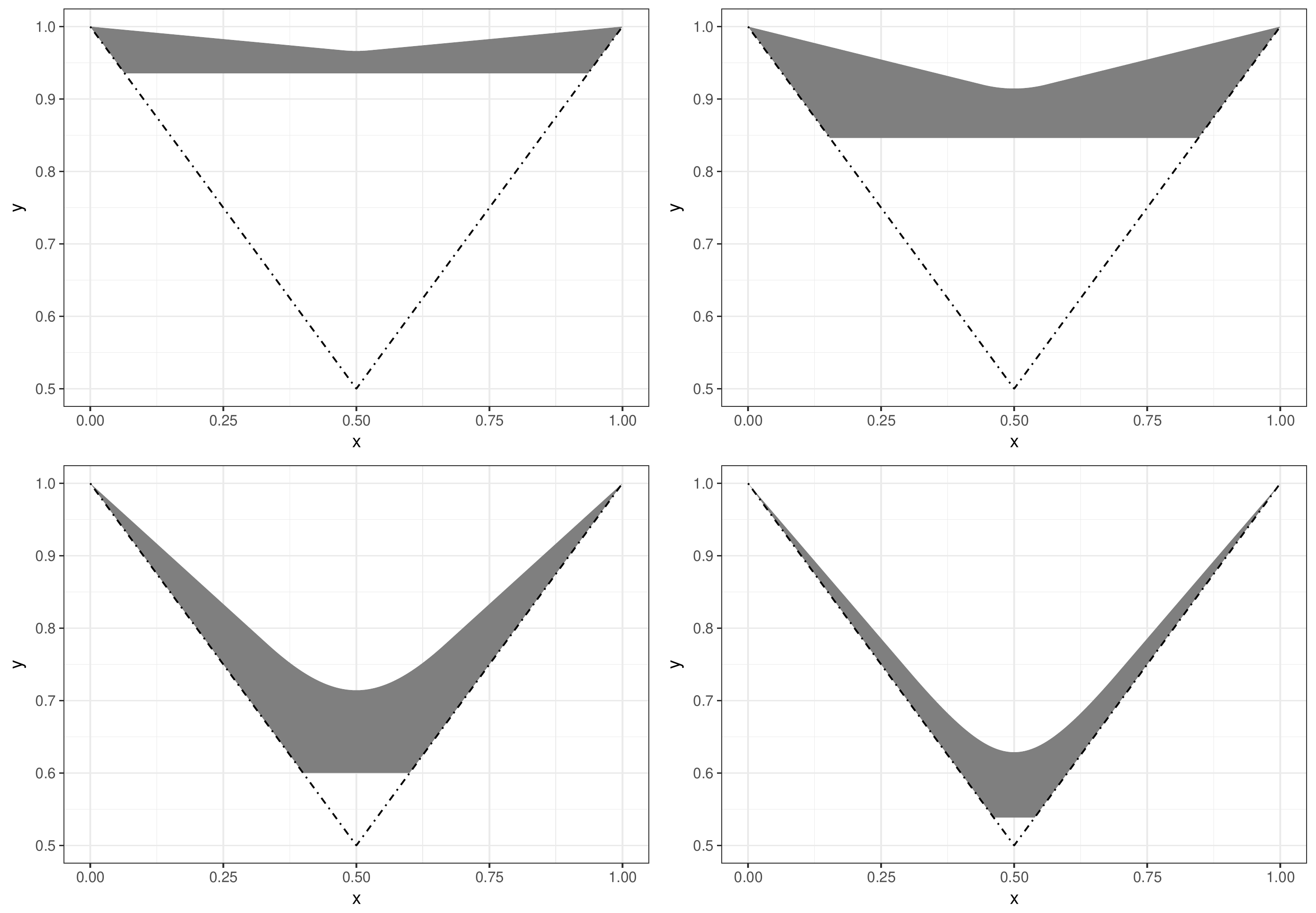}
		\caption{All $A \in \mathcal{A}^\rho_{\rho_0}$ lie in the shaded region $\Omega^\rho_{\rho_0}$. 
		  The graphic depicts the cases $\rho_{0}=0.1$ (upper left panel),  
		    $\rho_{0}=0.25$ (upper right panel), 
			for $\rho_{0}=0.75$ (lower left panel), and $\rho_{0}=0.9$ (lower right panel).}\label{fig:rho.full}
	\end{center}
\end{figure} 

\noindent Summing up, we have proved the following result, where $\Omega^\rho_{\rho_0}$ is defined by
$$
\Omega^\rho_{\rho_0}:=\big\{(t,y) \in [0,1]\times [\tfrac{1}{2},1]: L_{\varphi_1^{-1}(\rho_0)}(t) \leq y \leq U_{\rho_0}(t) \big\}.
$$
\begin{thm}\label{thm:main.rho}
Let $\rho_0 \in [0,1]$ be arbitrary but fixed. Then every $A \in \mathcal{A}^\rho_{\rho_0}$ fulfills $L_{\varphi_1^{-1}(\rho_0)} \leq  A \leq U_{\rho_0}$.
Additionally, $\Omega^\rho_{\rho_0}$ is best-possible in the sense that for each point $(t,y) \in \Omega^\rho_{\rho_0}$ there exists some $A \in \mathcal{A}^\rho_{\rho_0}$ with $A(t)=y$.
\end{thm}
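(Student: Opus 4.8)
\noindent The first assertion just combines the two lemmas already proved, namely the lower bound $A\ge L_{\varphi_1^{-1}(\rho_0)}$ and the upper bound $A\le U_{\rho_0}$ of Lemma~\ref{upperboundrho}. It remains to prove that $\Omega^\rho_{\rho_0}$ is best possible. The cases $\rho_0\in\{0,1\}$ are degenerate, since there $\mathcal{A}^\rho_0=\{\mathbf 1\}$, $\mathcal{A}^\rho_1=\{\max(\cdot,1-\cdot)\}$ and $\Omega^\rho_0$, $\Omega^\rho_1$ shrink to the graphs of these two functions; so fix $\rho_0\in(0,1)$ and an arbitrary $(t_0,y_0)\in\Omega^\rho_{\rho_0}$. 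The plan is to produce two functions $A_0,A_1\in\mathcal{A}$ with $A_0(t_0)=A_1(t_0)=y_0$ and $\rho(C_{A_0})\le\rho_0\le\rho(C_{A_1})$. Since every convex combination $A_\lambda:=\lambda A_0+(1-\lambda)A_1$ ($\lambda\in[0,1]$) is convex, bounded below by $\max(\cdot,1-\cdot)$, equal to $1$ at the endpoints and equal to $y_0$ at $t_0$, it lies in $\mathcal{A}$, and as $C\mapsto\rho(C)$ is $d_\infty$-continuous (clear from the integral representation of $\rho$ and dominated convergence, using $A\in[\tfrac12,1]$), the map $\lambda\mapsto\rho(C_{A_\lambda})$ is continuous; the intermediate value theorem then gives $\lambda^\ast$ with $\rho(C_{A_{\lambda^\ast}})=\rho_0$, so $A_{\lambda^\ast}\in\mathcal{A}^\rho_{\rho_0}$ realizes the point $(t_0,y_0)$.

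\medskip
\noindent For $A_0$ I would take the tent $T_{t_0,y_0}$. From $(t_0,y_0)\in\Omega^\rho_{\rho_0}$ we get $y_0\ge L_{\varphi_1^{-1}(\rho_0)}(t_0)$, and since $L_y\ge y$ and $L_y\ge\max(\cdot,1-\cdot)$ pointwise, this gives simultaneously $y_0\ge\varphi_1^{-1}(\rho_0)$ and $y_0\ge\max(t_0,1-t_0)$. The latter makes $T_{t_0,y_0}$ a legitimate member of $\mathcal{A}$; the former, together with (\ref{eqTxb}) and the monotonicity of $\varphi_1$, gives $\rho(C_{T_{t_0,y_0}})=\varphi_1(y_0)\le\varphi_1\!\big(\varphi_1^{-1}(\rho_0)\big)=\rho_0$.

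\medskip
\noindent For $A_1$ I would use the envelope structure. The map $y\mapsto P_{h_{\rho_0}(y),y}(t_0)$ is continuous on the compact interval $[\varphi_1^{-1}(\rho_0),1]$, and $U_{\rho_0}$ is the upper envelope of $\big(P_{h_{\rho_0}(y),y}\big)_y$, so there is $y^\ast$ with $Q:=P_{h_{\rho_0}(y^\ast),y^\ast}\in\mathcal{A}^\rho_{\rho_0}$ and $Q(t_0)=U_{\rho_0}(t_0)\ge y_0$. By construction $Q$ equals $\max(\cdot,1-\cdot)$ off the interval $[h_{\rho_0}(y^\ast),y^\ast]$ and is affine on it, hence $Q=\max\{\ell,\max(\cdot,1-\cdot)\}$ for the affine map $\ell$ carrying that middle segment; moreover $\ell(t_0)=U_{\rho_0}(t_0)$ whenever $t_0$ lies in the segment, while otherwise $Q(t_0)=\max(t_0,1-t_0)$, so that $\max(t_0,1-t_0)\le y_0\le U_{\rho_0}(t_0)=\max(t_0,1-t_0)$ forces $y_0=U_{\rho_0}(t_0)$. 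Put $\delta:=U_{\rho_0}(t_0)-y_0\ge0$ and set $A_1:=\max\{\ell-\delta,\,\max(\cdot,1-\cdot)\}$. Then $A_1$ is convex, lies above $\max(\cdot,1-\cdot)$, and satisfies $A_1\le Q$, hence $A_1(0)=A_1(1)=1$ and, $\rho$ being non-increasing in the Pickands function, $\rho(C_{A_1})\ge\rho(C_Q)=\rho_0$; and a short case check (using the two cases above together with $y_0\ge\max(t_0,1-t_0)$) gives $A_1(t_0)=y_0$. So $A_1\in\mathcal{A}$ has all the required properties, which finishes the argument.

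\medskip
\noindent The step I expect to be the main obstacle is the construction of $A_1$, precisely the assertion that the $P$-function attaining the value $U_{\rho_0}(t_0)$ at $t_0$ really is ``a single affine segment capped below by $\max(\cdot,1-\cdot)$'' with that segment passing through $(t_0,U_{\rho_0}(t_0))$; this rests on the explicit formula (\ref{eq:U}) and the computation in the Appendix, and one must handle the boundary situation where $t_0$ sits at an endpoint of the middle segment of $Q$ (which, as noted, can only occur when $y_0=U_{\rho_0}(t_0)=\max(t_0,1-t_0)$, so that $\delta=0$ and $A_1=Q$ already does the job). The remaining ingredients — $d_\infty$-continuity of $\rho$, verifying $A_1\in\mathcal{A}$, and the elementary facts about $L_y$, $T_{x,y}$ and $P_{x,y}$ — are routine. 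Finally, one may note that the intermediate value step can be bypassed for part of $\Omega^\rho_{\rho_0}$: every point of the graph of $U_{\rho_0}$, and more generally every point swept out by the curves $t\mapsto\big(t,P_{h_{\rho_0}(y),y}(t)\big)$ as $y$ runs through $[\varphi_1^{-1}(\rho_0),1]$, already lies on the graph of a member of $\mathcal{A}^\rho_{\rho_0}$.
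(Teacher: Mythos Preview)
Your argument is correct. The first assertion is indeed just the combination of the two lemmas, and your treatment of the degenerate cases $\rho_0\in\{0,1\}$ is accurate.

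For the best-possibility part you take a genuinely different route from the paper. The paper's one-line proof asserts that the graphs of the explicit families
\[
\big\{T_{x,\varphi_1^{-1}(\rho_0)}:x\in[1-\varphi_1^{-1}(\rho_0),\varphi_1^{-1}(\rho_0)]\big\}
\quad\text{and}\quad
\big\{P_{h_{\rho_0}(y),y}:y\in[\varphi_1^{-1}(\rho_0),1]\big\}
\]
(all of which lie in $\mathcal{A}^\rho_{\rho_0}$) already sweep out the whole of $\Omega^\rho_{\rho_0}$. Concretely, for fixed $t_0\le\tfrac12$ the map $x\mapsto T_{x,\varphi_1^{-1}(\rho_0)}(t_0)$ ranges continuously over $[L_{\varphi_1^{-1}(\rho_0)}(t_0),\,P_{0,\varphi_1^{-1}(\rho_0)}(t_0)]$, while $y\mapsto P_{h_{\rho_0}(y),y}(t_0)$ starts at $P_{0,\varphi_1^{-1}(\rho_0)}(t_0)$ and attains $U_{\rho_0}(t_0)$; the two ranges glue at the common endpoint, which is why the paper can dismiss the matter as ``direct''. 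This produces, for every point of $\Omega^\rho_{\rho_0}$, an \emph{explicit} piecewise-linear witness in $\mathcal{A}^\rho_{\rho_0}$.

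Your approach instead fixes $(t_0,y_0)$, exhibits one Pickands function $A_0=T_{t_0,y_0}$ through that point with $\rho\le\rho_0$ and another $A_1=\max\{\ell-\delta,\max(\cdot,1-\cdot)\}$ with $\rho\ge\rho_0$, and then invokes continuity of $\lambda\mapsto\rho(C_{A_\lambda})$ along the segment $A_\lambda=\lambda A_0+(1-\lambda)A_1$ in the convex set $\mathcal{A}$. All the ingredients check out: $L_y\ge y$ and $L_y\ge\max(\cdot,1-\cdot)$ give the admissibility of $T_{t_0,y_0}$; the representation $P_{x,y}=\max\{\ell,\max(\cdot,1-\cdot)\}$ holds because the middle affine segment of $P_{x,y}$ is precisely the chord of $\max(\cdot,1-\cdot)$ over $[x,y]$; the envelope $U_{\rho_0}$ is attained (compactness, or the explicit formula~\eqref{eq:U}); and the boundary case $t_0\notin(h_{\rho_0}(y^\ast),y^\ast)$ is handled correctly since it forces $y_0=U_{\rho_0}(t_0)=\max(t_0,1-t_0)$ and $\delta=0$.

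What each approach buys: the paper's coverage argument yields a witness that is always a two- or three-piece linear function from the $T$ or $P$ catalogue, which is useful if one wants explicit extremal examples. Your interpolation argument is more self-contained and does not require checking that the two one-parameter families actually exhaust the region (a verification the paper leaves implicit); the price is that the witness $A_{\lambda^\ast}$ is only given abstractly via the intermediate value theorem. Either way the theorem is fully proved.
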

\begin{proof}
The first assertion has already been proved, the second one is a direct consequence of the construction via the functions $T_{x,y}$ and $P_{x,y}$. 
\end{proof}

\noindent Figure \ref{fig:rho.full} depicts the set $\Omega^\rho_{\rho_0}$ for some choices of $\rho_0 \in [0,1]$.


\clearpage
\section{Kendall $\tau$}

It is well-known that $A \geq B$ implies $\tau(C_A) \leq \tau(C_B)$. In the sequel, however, we need strict inequality as stated in the following lemma:

\begin{lem}\label{preserveordertau}
For $A,B \in \mathcal{A}$ with $A \succ B $ we have $\tau(C_A) < \tau(C_B)$.
\end{lem}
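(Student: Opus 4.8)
The plan is to reduce the strict inequality to the known non-strict one by writing $\tau(C_B)-\tau(C_A)$ explicitly, in two different ways, as a sum of manifestly non-negative terms, and then showing that if all of these terms vanished then $A$ and $B$ would coincide on the non-empty open set $G:=\{t\in(0,1):A(t)>B(t)\}$, contradicting $A\succ B$. Throughout I use the representation $\tau(C_A)=\int_{[0,1]}\phi_A(t)\,dA'(t)$ recalled above, where $\phi_A(t):=t(1-t)/A(t)$ and $dA'$ denotes the Lebesgue--Stieltjes measure of the nondecreasing derivative of the convex function $A$, together with the trivial facts that $A\geq B$ forces $\phi_A\leq\phi_B$ on $[0,1]$ --- with strict inequality exactly on $G$ --- and that $\phi_A(0)=\phi_A(1)=0$.

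The key structural input is that $\phi_A$ is \emph{concave} on $[0,1]$ for every $A\in\mathcal{A}$. To check this, differentiate $A\,\phi_A=t(1-t)$ twice to obtain $A\phi_A''=-2-A''\phi_A-2A'\phi_A'$; substituting $\phi_A'=\big((1-2t)-t(1-t)A'/A\big)/A$ and multiplying by $A^2/2>0$ shows that $\phi_A''\leq 0$ is equivalent to
\[ -\tfrac12\,A\,t(1-t)\,A''\;+\;\big(t(1-t)(A')^2-(1-2t)AA'-A^2\big)\;\leq\;0. \]
Here the first summand is $\leq 0$ because $A''\geq 0$, and the second summand equals $-\big(A+(1-t)A'\big)\big(A-tA'\big)$, which is $\leq 0$ since $A+(1-t)A'\geq A-(1-t)\geq 0$ and $A-tA'\geq A-t\geq 0$, using $-1\leq A'\leq 1$ and $A\geq\max(t,1-t)$. (At a corner of $A$ the measure $dA'$ has an atom precisely where $\phi_A$ bends downward, so the smooth computation is enough; note also that this second summand is exactly $-A^2$ times the integrand of the first formula for $\tau(C_A)$ recalled above.)

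Next I would assemble the identity. Since $A(0)=A(1)=B(0)=B(1)=1$ and $\phi_A,\phi_B$ vanish at $0$ and $1$, a double integration by parts --- legitimate because $A,B$ are continuous and $\phi_A,\phi_B$ are Lipschitz --- gives, for any $f\in\{A,B\}$ and $\psi\in\{\phi_A,\phi_B\}$,
\[ \int_{[0,1]}\psi\,df'\;=\;\big(\psi'(0^+)-\psi'(1^-)\big)\;+\;\int_{[0,1]}f\,d\psi', \]
where $d\psi'$ is a non-positive measure by concavity of $\psi$. Writing $\tau(C_B)-\tau(C_A)=\int\phi_B\,dB'-\int\phi_A\,dA'$ and inserting $\pm\int\phi_B\,dA'$ in one case and $\pm\int\phi_A\,dB'$ in the other, while using the above to evaluate $\int\phi_\bullet\,dB'-\int\phi_\bullet\,dA'=\int(A-B)\,d(-\phi_\bullet')$, yields
\[ \tau(C_B)-\tau(C_A)\;=\;\int_{[0,1]}(A-B)\,d(-\phi_B')\;+\;\int_{[0,1]}(\phi_B-\phi_A)\,dA' \]
and
\[ \tau(C_B)-\tau(C_A)\;=\;\int_{[0,1]}(\phi_B-\phi_A)\,dB'\;+\;\int_{[0,1]}(A-B)\,d(-\phi_A'). \]
All four integrals are $\geq 0$ (because $A-B\geq 0$, $\phi_B-\phi_A\geq 0$, $dA',dB'\geq 0$, and $d(-\phi_A'),d(-\phi_B')\geq 0$), which in passing re-proves $\tau(C_A)\leq\tau(C_B)$.

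Finally, suppose $A\succ B$ but $\tau(C_A)=\tau(C_B)$. Then all four integrals above vanish; from $\int(\phi_B-\phi_A)\,dA'=0=\int(\phi_B-\phi_A)\,dB'$ and the fact that $\phi_B-\phi_A=t(1-t)(1/B-1/A)$ is strictly positive exactly on $G$, we conclude $dA'(G)=dB'(G)=0$, i.e.\ both $A$ and $B$ are affine on every connected component $(c,d)$ of the non-empty open set $G$. But $A=B$ at each endpoint of $(c,d)$ --- at an endpoint lying in $(0,1)$ because it is not in $G$ and $A\geq B$, and at the endpoint $0$ (respectively $1$) because $A(0)=B(0)=1$ (respectively $A(1)=B(1)=1$) --- so these two affine functions agree at both ends of $(c,d)$, hence on all of $[c,d]$, contradicting $A>B$ on $(c,d)$. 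Therefore $\tau(C_A)<\tau(C_B)$. I expect the concavity of $\phi_A=t(1-t)/A$ to be the crux, both in recognizing it as the decisive property and in verifying it (the displayed computation, plus the routine bookkeeping at corners of $A$ and the justification of the integrations by parts for merely convex $A,B$); once it is available, the double identity and the component argument are straightforward.
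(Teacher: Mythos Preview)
Your proof is correct and follows a genuinely different route from the paper's. The paper works in two dimensions with the copulas themselves: it uses the well-known identity $\int C_A\,d\mu_{C_B}=\int C_B\,d\mu_{C_A}$ to write $\tau(C_B)-\tau(C_A)$ as $4\big(\int (C_B-C_A)\,d\mu_{C_B}+\int (C_B-C_A)\,d\mu_{C_A}\big)$, and then obtains strictness by invoking the description of $\mathrm{Supp}(\mu_{C_D})$ from \cite{TSFS} to ensure both measures charge the open set $\{C_B>C_A\}$. Your argument stays entirely on the level of the Pickands functions: the key new ingredient is the concavity of $\phi_A(t)=t(1-t)/A(t)$, which you verify via the factorisation $t(1-t)(A')^2-(1-2t)AA'-A^2=-(A+(1-t)A')(A-tA')$ and the constraints $A\geq\max(t,1-t)$, $|A'|\leq 1$. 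With this in hand, the double integration by parts yields two non-negative decompositions of $\tau(C_B)-\tau(C_A)$, and the strictness comes from the elementary affine argument on the components of $\{A>B\}$. Your approach is self-contained (no appeal to the support result in \cite{TSFS}) and exposes an attractive structural fact about $\phi_A$; the paper's approach is shorter once the external reference is granted and makes the link to the concordance ordering more transparent. The technical points you flag---handling corners of $A$ in the concavity argument and justifying integration by parts for convex (not $C^2$) functions---are indeed routine: the former follows since a jump of $A'$ at $t_0$ produces a jump of $\phi_A'=\big((1-2t)-\phi_A A'\big)/A$ of the opposite sign, and the latter is standard for $\psi$ Lipschitz and $f'$ monotone (hence BV), noting that the boundary term $\psi'(0^+)-\psi'(1^-)=2$ is the same for $f=A$ and $f=B$ and cancels.
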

\begin{proof}
If $A \geq B$ then $C_{A}\leq C_{B}$ holds and we get  
\begin{align}\label{eq:tau-strict}
\tau(C_{B})-\tau(C_{A})&=-1+4\int_{[0,1]^2}C_{B}d\mu_{C_{B}}+1-4\int_{[0,1]^2}C_{A}d\mu_{C_{A}} \nonumber \\
&=4\left(\int_{[0,1]^2}(C_{B}-C_{A})d\mu_{C_{B}}+\int_{[0,1]^2}C_{A}d\mu_{C_{B}}-\int_{[0,1]^2}C_{A}d\mu_{C_{A}}\right) \nonumber \\
&=4\left(\int_{[0,1]^2}(C_{B}-C_{A})d\mu_{C_{B}}+\int_{[0,1]^2}C_{B}d\mu_{C_{A}}-\int_{[0,1]^2}C_{A}d\mu_{C_{A}}\right)  \nonumber \\
&=4\left(\int_{[0,1]^2}(C_{B}-C_{A})d\mu_{C_{B}}+\int_{[0,1]^2}(C_{B}-C_{A})d\mu_{C_{A}}\right)\geq 0.
\end{align}
According to \cite{TSFS}, setting 
$$
L_D := \max\{x \in [0,1]: D(x)=1-x\}, \quad R_D := \min\{x \in [0,1]: D(x)=x\}
$$
for every $D \in \mathcal{A}$ the support $\textrm{Supp}(\mu_{C_D})$ of $\mu_{C_D}$ coincides with the set $\{(x,y) \in [0,1]^2: f_{L_D}(x) \leq y \leq f_{R_D}(x)\}$, whereby
$f_t:[0,1] \rightarrow [0,1]$ is defined as $f_t(x)=x^{\frac{1}{t}-1}$ for $t \in (0,1)$, and as
$f_0=0,f_1=1$ for $t \in \{0,1\}$. \\
Suppose now that $A \succ B $ holds and that $B$ does not coincide with $t \mapsto \max\{1-t,t\}$ (in which case $\tau(C_A)<1=\tau(C_B)$ is trivial). Obviously $L_A \leq L_B < \frac{1}{2}$ as well as $R_A \geq R_B > \frac{1}{2}$ and we can find some $t_0 \in (L_B,R_B)$ fulfilling
$A(t_0)>B(t_0)$. By continuity there exists some $\delta>0$ such that $A(t)>B(t)$ holds for every
$t \in [\underline{t},\overline{t}]:=[t_0 - \delta, t_0 + \delta] \subseteq (L_B,R_B) \subseteq (L_A,R_A)$. Considering 
$$
\big\{(x,y) \in (0,1)^2: f_{\underline{t}}(x) < y < f_{\overline{t}}(x) \big\} \subset 
\big\{ (x,y) \in (0,1)^2: C_B(x,y) > C_A(x,y)\big\}=:\{C_B > C_A\}
$$
and
$$
\big\{(x,y) \in (0,1)^2: f_{\underline{t}}(x) < y < f_{\overline{t}}(x) \big\} \subset 
\textrm{Supp}(\mu_{C_B}) \subseteq \textrm{Supp}(\mu_{C_A})
$$
shows $\mu_{C_B}(\{C_B > C_A\})>0$ and $\mu_{C_A}(\{C_B > C_A\})>0$. Hence 
$$
\int_{[0,1]^2}(C_{B}-C_{A})d \mu_{C_{B}} >0 \, \textrm{ and }
\int_{[0,1]^2}(C_{B}-C_{A})d\mu_{C_{A}} > 0
$$
follows, and applying eq. (\ref{eq:tau-strict}) yields $\tau(C_A) < \tau(C_B)$.
\end{proof}

\noindent Defining $\varphi_2:[\frac{1}{2},1] \rightarrow [0,1]$ by 
$\varphi_2(y)=-1+\frac{1}{y}$
we get 
\begin{equation}
\tau(C_{T_{x,y}})=\varphi_2(y)
\end{equation}
for every $T_{x,y}$ with $x\in[1-y,y]$, i.e. $\tau(C_{T_{x,y}})$ does not depend on $x\in[1-y,y]$. Obviously $\varphi_2$ is a strictly decreasing homeomorphism mapping $[\frac{1}{2},1]$ onto 
$[0,1]$. Furthermore, letting $\varphi_2^{-1}(\tau)=\frac{1}{1+\tau}$ denote its inverse,
\begin{equation}
T_{x,\varphi_2^{-1}(\tau_0)}\in\mathcal{A}^\tau_{\tau_0}
\end{equation}
holds for every $x\in[1-\varphi_2^{-1}(\tau_0),\varphi_2^{-1}(\tau_0)]$.\\
\noindent A straightforward calculation yields 
\begin{equation}
\tau(C_{P_{x,y}})=\frac{1-3x-y+4xy}{y-x}.
\end{equation}
for $y>x$ and $\tau(P_{x,y})=\tau(P_{y,y})=\tau(M)$ for $x=y=\frac{1}{2}$. 
Considering $L_{y}=P_{1-y,y}$ and defining $\psi_2:[\frac{1}{2},1]\to[0,1]$ by 
$$\psi_2(y)=2(1-y)$$
we get 
\begin{equation}
\tau(C_{L_{y}})=\psi_2(y)
\end{equation}
for every $y \in [\frac{1}{2},1]$. Obviously $\psi_2$ is a strictly decreasing homeomorphism mapping $[\frac{1}{2},1]$ onto $[0,1]$. Letting $\psi_2^{-1}(\tau)=\frac{2-\tau}{2}$ denote its inverse, 
\begin{equation*}
L_{\psi_2^{-1}(\tau_0)} \in \mathcal{A}^\tau_{\tau_0}
\end{equation*}
holds for every $\tau_0\in[0,1]$.  

For every $y \in [\frac{1}{2},1]$ and every $x \in [0,\frac{1}{2}]$ obviously 
$\tau(C_{P_{x,y}}) \in [\tau(C_{T_{y,y}}),1]=[\varphi_2(y),1]$. Additionally, for fixed $y \in (\frac{1}{2},1]$ and $0 \leq x_1 < x_2 \leq 1$ we have 
$P_{x_1,y} \succ P_{x_2,y}$, implying that the mapping $x \mapsto \tau(C_{P_{x,y}})$ is an increasing homeomorphism mapping 
$[0,\frac{1}{2}]$ onto $[\varphi_2(y),1]$. 

Again consider $\tau_0 \in [0,1)$. Then for every $y \geq \varphi_2^{-1}(\tau_0) > \frac{1}{2}$ there exists exactly one $x \in [0,\frac{1}{2})$ with $\tau(C_{P_{x,y}})=\tau_0$. It is easy to verify that this $x$ is given by 
$$
x=\frac{-1+y+\tau_0y}{-3+\tau_0+4y}:=h_{\tau_0}(y)
$$
and that $h_{\tau_{0}}(\varphi_2^{-1}(\tau_0))=0$ as well as 
$h_{\tau_{0}}(1)= \frac{\tau_0}{1+\tau_0}= 1- \varphi_2^{-1}(\tau_0)$ holds.
Since the derivative $h_{\tau_{0}}'$ of $h_{\tau_{0}}$ is given by
$$
h'_{\tau_{0}}(y)=\frac{(1-\tau_0)^2}{(-3+\tau_0+4y)^2} 
$$ 
we get $\min_{y\in[\varphi_2^{-1}(\tau_{0}),1]}h_{\tau_{0}}'(y)=h_{\tau_{0}}'(1)=\left(\frac{1-\tau_0}{1+\tau_0}\right)^2>0$. 
As direct consequence, $h_{\rho_{0}}$ is strictly increasing and for $\varphi_2^{-1}(\tau_0) \leq \underline{y} < \overline{y} \leq 1$ 
\begin{align}\label{eq:lip.invh2}
h_{\rho_{0}}(\overline{y})-h_{\rho_{0}}(\underline{y}) \geq h_{\rho_{0}}'(1)(\overline{y}-\underline{y}) = \left(\frac{1-\tau_0}{1+\tau_0}\right)^2(\overline{y}-\underline{y})
\end{align} 
holds. This non-contractivity property will be key in the proof of Lemma \ref{upperboundtau}.

For every $y\in[\varphi_2^{-1}(\tau_{0}),1]$, $P_{h_{\tau_0}(y),y}(\frac{1}{2})=1-\frac{\tau_0}{2}=\psi_2^{-1}(\tau_0)$, so the upper envelope $U_{\tau_0}$
of the family $(P_{h_{\tau_0}(y),y})_{y \in [\varphi_2^{-1}(\tau_0),1]}$ is given by 
\begin{align}\label{eq:Utau}
U_{\tau_0}(t)&=
\begin{cases}
P_{h_{\tau_0}(\varphi_2^{-1}(\tau_{0})),\varphi_2^{-1}(\tau_{0})}(t) = P_{0,\frac{1}{1+\tau_0}}(t)& \text{if }t\in[0,\frac{1}{2}] \\
P_{h_{\tau_0}(1),1}(t)=P_{\frac{\tau_0}{1+\tau_0},1}(t) & \text{if }t\in(\frac{1}{2},1] \\
\end{cases} \notag \\
&=T_{\frac{1}{2},\psi_2^{-1}(\tau_0)}(t)=T_{\frac{1}{2},1-\frac{\tau_0}{2}}(t)
\end{align}
The function $U_{\tau_0}$ is symmetric w.r.t. $t=\frac{1}{2}$ and convex on $[0,1]$. 

We now show that $L_{\varphi_2^{-1}(\tau_0)} \leq A \leq U_{\tau_0}$ holds for every $A \in \mathcal{A}^\tau_{\rho_0}$ and $\rho_0 \in [0,1]$ and start with the lower bound.

\begin{lem}
	Suppose that $\tau_0 \in [0,1]$. Then every $A \in \mathcal{A}^\tau_{\tau_0}$ fulfills $A \geq L_{\varphi_2^{-1}(\tau_0)}$.
\end{lem}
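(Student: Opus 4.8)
The plan is to mimic the proof of the Spearman lower bound (the lemma preceding Lemma~\ref{upperboundrho}), replacing the $\rho$-quantities by their $\tau$-analogues. First I would dispose of the boundary cases $\tau_0\in\{0,1\}$: for $\tau_0=0$ one has $\varphi_2^{-1}(0)=1$ and $L_1\equiv 1$, while $\tau(C_A)=0$ forces $C_A$ to be the independence copula and hence $A\equiv 1=L_1$; for $\tau_0=1$ one has $\varphi_2^{-1}(1)=\tfrac12$, $L_{1/2}(t)=\max\{t,1-t\}$, and $\tau(C_A)=1$ forces $C_A=M$ and hence $A=L_{1/2}$. In both cases $A\geq L_{\varphi_2^{-1}(\tau_0)}$ trivially, so the interesting range is $\tau_0\in(0,1)$.

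For $\tau_0\in(0,1)$, set $b:=\varphi_2^{-1}(\tau_0)=\tfrac{1}{1+\tau_0}\in(\tfrac12,1)$. Since every $A\in\mathcal{A}$ satisfies $A(t)\geq\max\{t,1-t\}$, we get $A(t)\geq 1-t=L_b(t)$ for $t\in[0,1-b]$, $A(t)\geq t=L_b(t)$ for $t\in[b,1]$, and at $t\in\{1-b,b\}$ the value $L_b(t)$ equals $\max\{t,1-t\}$, so the inequality holds there too. Consequently, a violation $A(t_0)<L_b(t_0)$ can occur only at some $t_0$ in the open interval $(1-b,b)$, where $L_b\equiv b$; assume for contradiction that $A(t_0)<b$ for such a $t_0$.

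Next I would invoke convexity of $A$ together with $A(0)=A(1)=1$: the chord of $A$ over $[0,t_0]$ and the chord over $[t_0,1]$ lie weakly above $A$, and these two chords are precisely the two linear pieces of $T_{t_0,A(t_0)}$, so $A\leq T_{t_0,A(t_0)}$. The one point requiring care is that $T_{t_0,A(t_0)}$ is a bona fide Pickands function, i.e.\ that $A(t_0)\in[\tfrac12,1]$ and $t_0\in[1-A(t_0),A(t_0)]$; both follow at once from $A(t_0)\geq\max\{t_0,1-t_0\}\geq\tfrac12$. Since $A\leq T_{t_0,A(t_0)}$ and $B\mapsto\tau(C_B)$ reverses the pointwise order on $\mathcal{A}$ (the well-known inequality recalled just before Lemma~\ref{preserveordertau}; the strict version in Lemma~\ref{preserveordertau} is not even needed here), we obtain $\tau(C_A)\geq\tau(C_{T_{t_0,A(t_0)}})=\varphi_2(A(t_0))$. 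Finally, $A(t_0)<b=\varphi_2^{-1}(\tau_0)$ together with strict monotonicity of $\varphi_2$ gives $\varphi_2(A(t_0))>\varphi_2(b)=\tau_0$, whence $\tau(C_A)>\tau_0$, contradicting $A\in\mathcal{A}^\tau_{\tau_0}$.

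I expect essentially no genuine obstacle: the proof is a direct transcription of the Spearman argument. The only mildly delicate points are the reduction to the open middle interval $(1-b,b)$ (handled by the envelope bound $A(t)\geq\max\{t,1-t\}$) and the verification that the sandwiching function $T_{t_0,A(t_0)}$ lies in $\mathcal{A}$ so that the $\tau$-formula and the order-reversing property apply — both routine consequences of $A(t)\geq\max\{t,1-t\}$ and convexity of $A$.
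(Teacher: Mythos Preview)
Your proposal is correct and follows essentially the same route as the paper's proof: reduce to $\tau_0\in(0,1)$, assume $A(t_0)<\varphi_2^{-1}(\tau_0)$ for some $t_0$ in the middle interval, sandwich $A\leq T_{t_0,A(t_0)}$ by convexity, and obtain $\tau(C_A)\geq\varphi_2(A(t_0))>\tau_0$. You supply a bit more detail (the explicit reduction to $t_0\in(1-b,b)$, the check that $T_{t_0,A(t_0)}\in\mathcal{A}$, and the remark that only the non-strict order-reversal is required), but the argument is the same.
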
 

\begin{proof}
	The assertion is trivial for $\tau_0 \in \{0,1\}$, so we may consider $\tau_0 \in (0,1)$. Suppose that $A \in \mathcal{A}^\tau_{\tau_0}$ and 
	that there exists $t_0 \in (1-\varphi_2^{-1}(\tau_0),\varphi_2^{-1}(\tau_0))$ such that $A(t_0) < L_{\varphi_2^{-1}(\tau_0)}(t_0)=\varphi_2^{-1}(\tau_0)$ holds. 
	Convexity of $A$ together with $A(0)=A(1)=1$ implies $A \leq T_{t_0,A(t_0)}$, from which we immediately get 
	$$
	\tau(C_A) \geq \tau(C_{T_{t_0,A(t_0)}}) = \varphi_2 \circ A(t_0) > \varphi_2 \circ \varphi_2^{-1}(\tau_0)=\tau_0,
	$$ 
	a contradiction to $A \in \mathcal{A}^\tau_{\tau_0}$.	
\end{proof}	

\begin{lem}\label{upperboundtau}
Suppose that $\tau_0 \in [0,1]$. Then every $A \in \mathcal{A}^\tau_{\tau_0}$ fulfills $A \leq U_{\tau_0}$.
\end{lem}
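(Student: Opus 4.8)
\emph{Proof idea.} The plan is to follow the proof of Lemma~\ref{upperboundrho} line by line, simply replacing $\varphi_1,\psi_1,h_{\rho_0},U_{\rho_0}$ and inequality~\eqref{eq:lip.invh1} by their Kendall-$\tau$ counterparts $\varphi_2,\psi_2,h_{\tau_0},U_{\tau_0}$ and inequality~\eqref{eq:lip.invh2}, with one genuinely new ingredient: wherever the proof of Lemma~\ref{upperboundrho} invoked the (there automatic) strict monotonicity ``$A\succ B\Rightarrow\rho(C_A)>\rho(C_B)$'', I would now invoke Lemma~\ref{preserveordertau}. First dispose of the extreme cases: for $\tau_0=0$ equation~\eqref{eq:Utau} gives $U_0\equiv 1$, so the bound is trivial, and for $\tau_0=1$ it gives $U_1=T_{1/2,1/2}$, i.e.\ the map $t\mapsto\max\{t,1-t\}$, the unique Pickands function whose extreme-value copula has $\tau=1$. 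For $\tau_0\in(0,1)$ I argue by contradiction: suppose $A\in\mathcal{A}^\tau_{\tau_0}$ and $s_0:=A(t_0)>U_{\tau_0}(t_0)$ for some $t_0\in[0,1]$. The case $s_0=1$ is impossible, since a convex $A\le 1$ with $A(0)=A(t_0)=A(1)=1$ attains its maximum at an interior point and is therefore constant, forcing $\tau(C_A)=0\neq\tau_0$. By symmetry of $U_{\tau_0}$ about $t=\tfrac12$ I may assume $t_0\in(0,\tfrac12]$, and continuity of $U_{\tau_0}$ then gives $r:=\min_{t\in[0,1]}\sqrt{(t-t_0)^2+(U_{\tau_0}(t)-s_0)^2}>0$.

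Next I would reproduce verbatim the construction and the ``key observation'' of Lemma~\ref{upperboundrho}: letting $x^*$ be the unique point of $(0,1-\varphi_2^{-1}(\tau_0))$ at which the line through $(1,1)$ and $(t_0,s_0)$ meets the anti-diagonal $t\mapsto 1-t$ (that $x^*$ lies in this interval is an elementary computation using $t_0\le\tfrac12$ and $s_0>U_{\tau_0}(t_0)$), the analogue of~\eqref{y''y'} states that for every $x\in[0,x^*]$, putting $y'=h_{\tau_0}^{-1}(x)$, letting $f$ be the line through $(x,1-x)$ and $(t_0,s_0)$, and letting $y''\in[\varphi_2^{-1}(\tau_0),1]$ be its fixed point, one has $y''-y'\ge r/\sqrt2$. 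Combined with the non-contractivity~\eqref{eq:lip.invh2}, this guarantees that every ``pivot step'' advances the abscissa by at least $r\,h'_{\tau_0}(1)/\sqrt2=r\big(\tfrac{1-\tau_0}{1+\tau_0}\big)^2/\sqrt2>0$. The iteration then runs exactly as before. Step~1 uses the line $f_1$ through $(0,1)$ and $(t_0,s_0)$, whose slope is strictly larger than $-\tau_0$ (the slope of the left branch of $U_{\tau_0}$), so that its fixed point $y_1$ lies in $(\varphi_2^{-1}(\tau_0),1)$ and $x_1:=h_{\tau_0}(y_1)$ makes $P_{x_1,y_1}$ a member of $\mathcal{A}^\tau_{\tau_0}$ with $f_1\ge P_{x_1,y_1}$ on $[0,y_1]$; if $x_1\ge x^*$, or if $A\ge P_{x_1,y_1}$ already holds on $(0,t_0)$, then $A\succ P_{x_1,y_1}$ (strict at $t_0$, since $P_{x_1,y_1}(t_0)\le U_{\tau_0}(t_0)<s_0$) and Lemma~\ref{preserveordertau} gives $\tau(C_A)<\tau(C_{P_{x_1,y_1}})=\tau_0$, a contradiction; otherwise one pivots the line about $(t_0,s_0)$ onto the point $(x_1,1-x_1)$ and repeats, reaching after finitely many steps indices $x_i<x^*\le x_{i+1}$ by the advance estimate. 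The final step then yields, via convexity of $A$ and the same violating-point bookkeeping, $A\ge P_{x^*,y^*}$ on all of $[0,1]$ with $A(t_0)>U_{\tau_0}(t_0)\ge P_{x^*,y^*}(t_0)$, where $y^*=h_{\tau_0}^{-1}(x^*)$; hence $A\succ P_{x^*,y^*}$ and Lemma~\ref{preserveordertau} gives $\tau(C_A)<\tau(C_{P_{x^*,y^*}})=\tau_0$, the desired contradiction.

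I expect the main obstacle to be exactly the same as in Lemma~\ref{upperboundrho}: verifying the planar-geometry separation estimate (the $\tau$-analogue of~\eqref{y''y'}) in the present coordinates and carrying out the bookkeeping of the pivot construction — in particular checking that $x^*$ is well defined, that all fixed points $y_j$ stay in $[\varphi_2^{-1}(\tau_0),1]$ so that the $P_{x_j,y_j}$ are legitimate elements of $\mathcal{A}^\tau_{\tau_0}$, and that the process terminates after finitely many pivots. What differs from the $\rho$-case is that strictness is no longer free and must be supplied by Lemma~\ref{preserveordertau} at each contradiction; conversely, since $U_{\tau_0}=T_{1/2,1-\tau_0/2}$ is a single tent function rather than a curve with a non-linear middle piece, the minimal distance $r$ and the pivot point $x^*$ are explicitly computable and the tedious parts of the argument shorten considerably.
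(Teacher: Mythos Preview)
Your proposal is correct and follows the paper's own proof essentially verbatim: the paper reproduces the pivot construction of Lemma~\ref{upperboundrho} with $\varphi_2,h_{\tau_0},U_{\tau_0}$ and inequality~\eqref{eq:lip.invh2} in place of their $\rho$-counterparts, reaching the same contradiction $\tau(C_A)<\tau_0$ at each terminal case. Your explicit appeal to Lemma~\ref{preserveordertau} for the strict inequality is exactly what the paper relies on (there tacitly), and your remark that the tent shape of $U_{\tau_0}$ simplifies the geometry is a valid side observation not spelled out in the paper.
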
 

\begin{proof}
For the extreme cases $\tau_0=0$ and $\tau_0=1$ we get $U_{0}=1$ and $U_1=T_{\frac{1}{2},\frac{1}{2}}$ respectively, so the result obviously holds. 
For the rest of the proof we consider $\tau_0 \in (0,1)$.  
Suppose that $A\in\mathcal{A}^\tau_{\tau_{0}}$ and that $s_0:=A(t_{0})>U_{\tau_{0}}(t_{0})$ for some $t_{0}\in[0,1]$. The case $s_0=1$ yields a contradiction immediately, so we assume $s_0 < 1$. Symmetry of $U_{\tau_0}$ implies that it suffices to consider $t_0 \in (0,\frac{1}{2}]$, continuity of $U_{\tau_0}$ yields $r=\min_{t\in[0,1]}\sqrt{(t-t_0)^2+(U_{\tau_{0}}(t)-s_0)^2}>0$.\\
\noindent Define $f^*$ by $f^*(t)=\frac{s_0-1}{t_0-1}(t-t_0)+s_0$ (increasing green with positive slope in Figure \ref{fig:constructiontau}). Convexity of $A$ 
yields $f^*(t)\leq A(t)$ for $t\leq t_0$ and $f^*(t)\geq A(t)$ for $t\geq t_0$. Let $x^*$ denote the unique point in the interval $(0,1-\varphi_2^{-1}(\tau_0))$ fulfilling $f^*(x^*)=1-x^*$. 
The following observation is key for the rest of the proof: For every $x\in[0,x^*]$, setting 
$y'=h_{\tau_0}^{-1}(x) \in [\varphi_2^{-1}(\tau_0),1]$, defining $f:[x,1] \rightarrow [\varphi_2^{-1}(\tau_0),1]$ by $f(t)=\frac{s_0-1+x}{t_0-x}(t-t_0)+s_0$, and letting $y''$ denote the unique point 
in $[\varphi_2^{-1}(\tau_0),1]$ fulfilling $f(y'')=y''$     
\begin{align}\label{y''y'tau}
y''-y'\geq\frac{r}{\sqrt{2}}
\end{align}
holds.\\
\noindent \emph{Step 1:} Define $f_1:\mathbb{R} \rightarrow \mathbb{R}$ by $f_{1}(t)=\frac{s_0-1}{t_0}(t-t_0)+s_0$ (green line with negative slope in Figure \ref{fig:constructiontau}). Convexity of $A$ yields $ A(t)\leq f_{1}(t)$ for $t\leq t_0$ and $A(t)\geq f_{1}(t)$ for $t\geq t_0$. Let $y_{1} \in [\varphi_2^{-1}(\tau_0),1]$
denote the unique point fulfilling $f_{1}(y_{1})=y_1$ and set $x_1:=h_{\tau_{0}}(y_1)$. Then $P_{x_1,y_1}\in\mathcal{A}^\tau_{\tau_{0}}$ and $f_{1}(t)\geq P_{x_1,y_1}(t)$ for $t\leq y_1$. Case 1: If 
$x_{1} \geq  x_{*}$, then $A(t) \succ P_{x_1,y_1}(t)$, so $\tau(C_A)<\tau(C_{P_{x_1,y_1}})=\tau_{0}$, contradiction. \\
Case 2: If $x_{1} < x^*$ and $A(t)\geq P_{x_1,y_1}(t)$ for all $t \in (0,t_0)$, then 
$A(t) \succ P_{x_1,y_1}(t)$, contradiction. \\
Case 3: If $x_1 < x^* $ and that $A(t)< P_{x_1,y_1}(t)$ holds for some $t<t_0$ we proceed with Step 2. \\
\emph{Step 2:} Define the function $f_2: \mathbb{R} \rightarrow \mathbb{R}$ by $f_{2}(t)=\frac{s_0-1+x_1}{t_0-x_1}(t-t_0)+s_0$ (blue line starting from $(x_1,1-x_1)$ in Figure \ref{fig:constructiontau}). Then $f_{2}(t)\geq P_{x_1,y_1}(t)$ for $t\in[x_1,y_1]$ and convexity of $A$ implies $A(t)\geq f_{2}(t)$ for $t\geq t_0$.
Let $y_2$ denote the unique point with $f_{2}(y_2)=y_2$ and set $x_2=h_{\tau_{0}}(y_2)$. 
Considering $y_2 - y_1 > \frac{r}{\sqrt{2}}$ and using ineq. (\ref{eq:lip.invh1}) we get $x_2-x_1 > \frac{r \, h_{\tau_0}'(1)}{\sqrt{2}} >0$. In case of $x_2 \geq x^*$ jump to the final step, if not, continue in the same manner to construct $x_3,x_4, \ldots, x_i$, where $i$ denote the first integer fulfilling $x_{i+1} \geq x^*$ and $x_i < x^*$ 
(notice that $x^*$ can be reached in finally many steps since 
$x_{j+1}-x_j > \frac{r \, h_{\tau_0}'(1)}{\sqrt{2}}$ holds, Figure \ref{fig:constructiontau} depicts the case $i=3$).  \\
\emph{Final step:} Since $A \geq P_{x_i,y_i}$ implies $A \succ P_{x_i,y_i}$, which directly yields a contradiction, assume that there exists some $t \in (0,t_0)$ with $A(t) <P_{x_i,y_i}(t)$ and set
$y^*=h_{\tau_0}^{-1}(x^*) \in [y_i,y_{i+1}]$.  
Convexity of $A$ implies $A(t)\geq f_{i+1}(t) \geq P_{x^*,y^*}(t)$ for $t\in [t_0,1]$ as well as 
$A(t) \geq f^*(t) \geq P_{x^*,y^*}(t)$ for $t \in [0,t_0]$. Considering $P_{x^*,y^*}(t_0) < A(t_0)$
we get $A \succ P_{x^*,y^*}$ and $\tau(C_A)<  \tau(C_{P_{x^*,y^*}})=\tau_0$, contradiction.   
\end{proof}

\begin{figure}[h!]
	\begin{center}
		\includegraphics[width = 15cm]{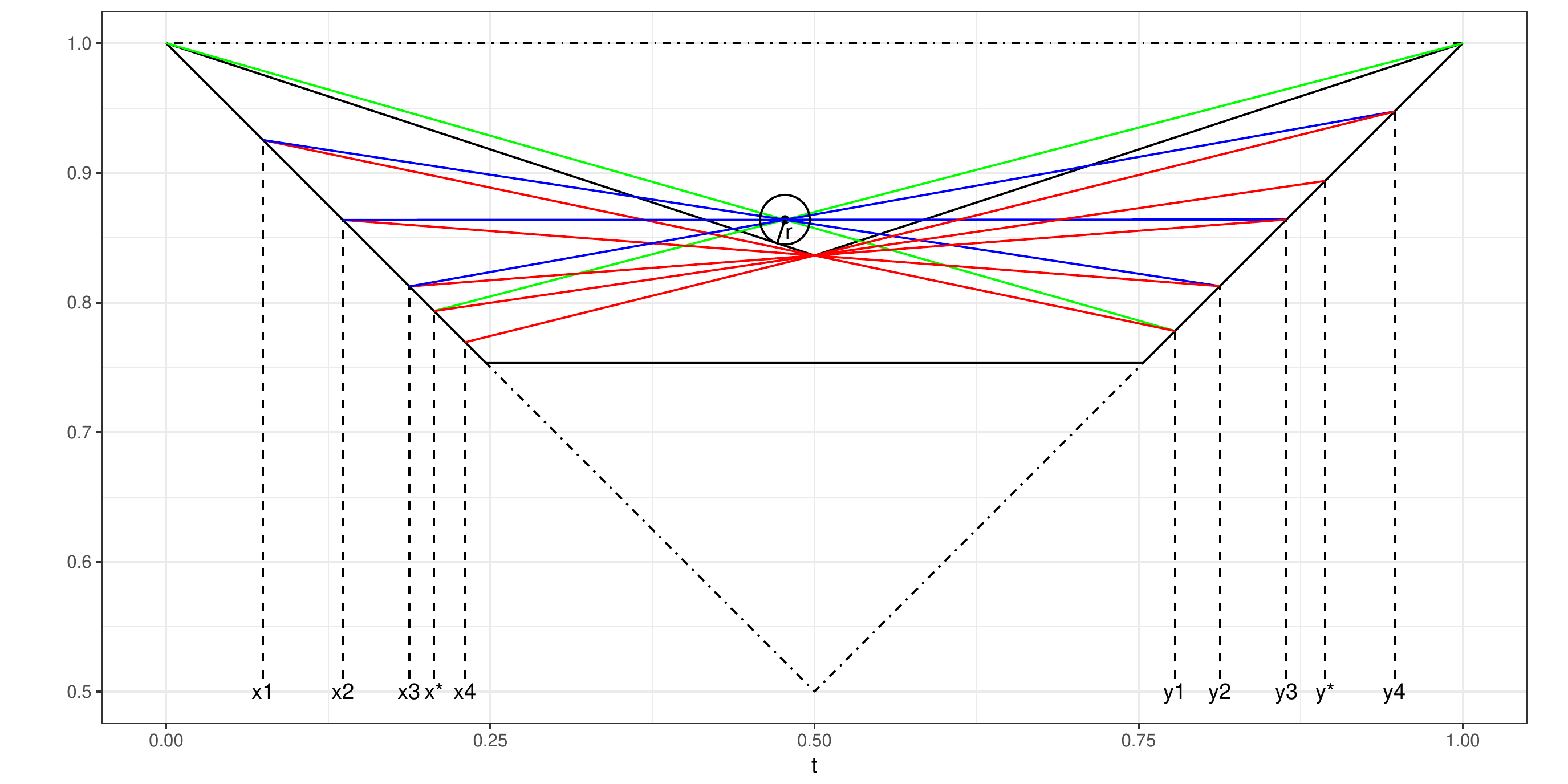}
		\caption{The construction used in the proof of Lemma \ref{upperboundtau}.}\label{fig:constructiontau}
	\end{center}
\end{figure}

\begin{figure}[h!]
	\begin{center}
		\includegraphics[width = 11cm,angle=270]{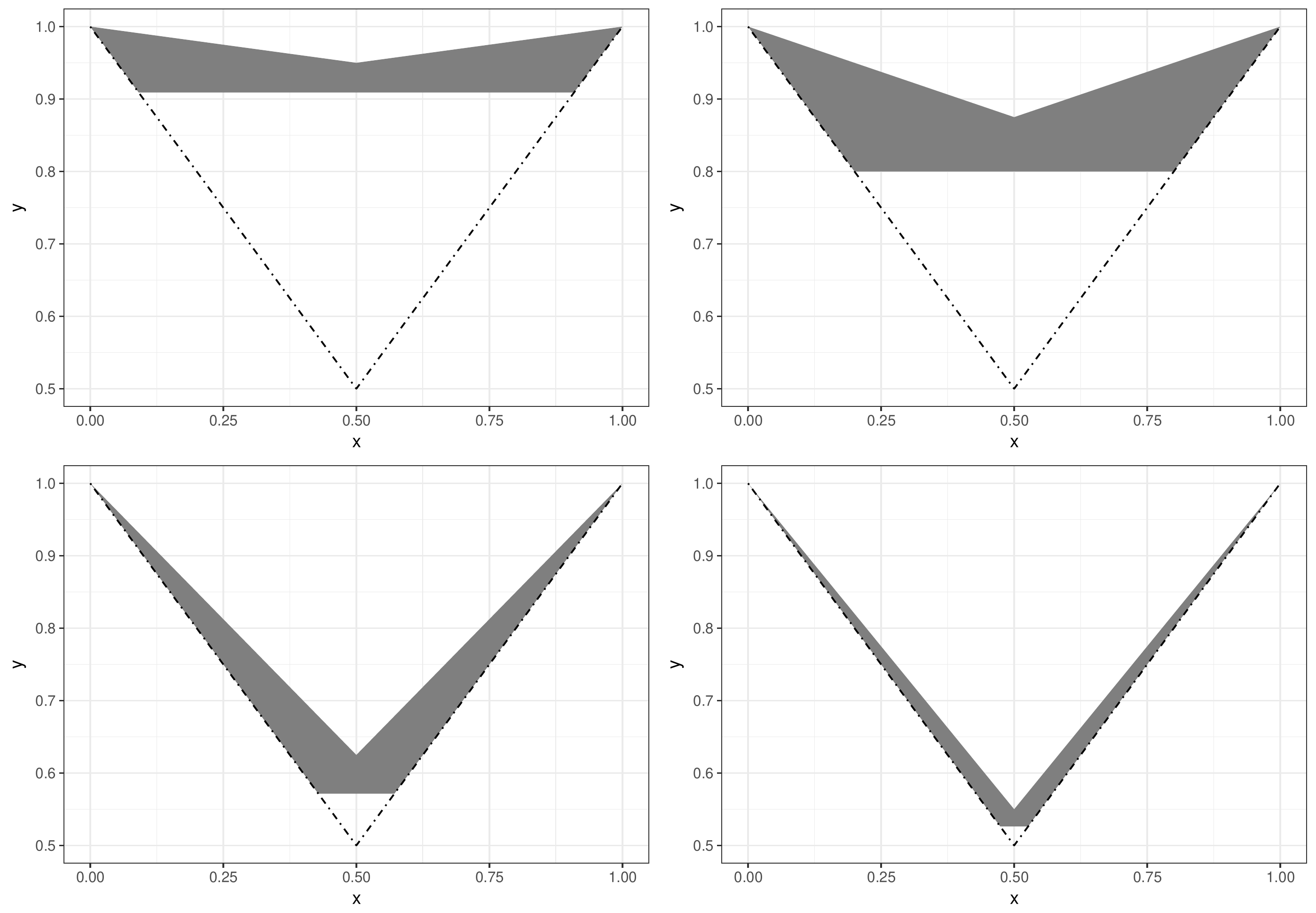}
		\caption{All $A \in \mathcal{A}^\tau_{\tau_0}$ lie in the shaded region $\Omega^\tau_{\tau_0}$. 
			The graphic depicts the cases $\tau_{0}=0.1$ (upper left panel),  
			$\tau_{0}=0.25$ (upper right panel), 
			for $\tau_{0}=0.75$ (lower left panel), and $\tau_{0}=0.9$ (lower right panel).}\label{fig:tau.full}
	\end{center}
\end{figure} 

\noindent Summing up, we have proved the following result, where $\Omega^\tau_{\tau_0}$ is defined by
$$
\Omega^\tau_{\tau_0}:=\big\{(t,y)\in [0,1]\times [\tfrac{1}{2},1]: L_{\varphi_2^{-1}(\tau_0)}(t) \leq y \leq U_{\tau_0}(t) \big\}.
$$
\begin{thm}
	Let $\tau_0 \in [0,1]$ be arbitrary but fixed. Then every $A \in \mathcal{A}^\tau_{\tau_0}$ fulfills $L_{\varphi_2^{-1}(\tau_0)} \leq  A \leq U_{\tau_0}$.
	Additionally, $\Omega^\tau_{\tau_0}$ is best-possible in the sense that for each point $(t,y) \in \Omega^\tau_{\tau_0}$ there exists some $A \in \mathcal{A}^\tau_{\tau_0}$ with $A(t)=y$.
\end{thm}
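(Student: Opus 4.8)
The first assertion, $L_{\varphi_2^{-1}(\tau_0)} \leq A \leq U_{\tau_0}$ for every $A \in \mathcal{A}^\tau_{\tau_0}$, has already been established by the two preceding lemmas together with the definition of $\Omega^\tau_{\tau_0}$, so the only thing left to prove is that $\Omega^\tau_{\tau_0}$ is best possible, i.e.\ that every $(t_0,y_0) \in \Omega^\tau_{\tau_0}$ is of the form $(t_0,A(t_0))$ for some $A \in \mathcal{A}^\tau_{\tau_0}$. The plan is to realize all such points using a single one-parameter family, namely the triangular functions $T_{x,v}$ with $x \in [1-v,v]$, where $v := \varphi_2^{-1}(\tau_0)$. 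The relevant facts are that $\tau(C_{T_{x,v}}) = \varphi_2(v) = \tau_0$ for every $x \in [1-v,v]$ (so the whole family lies in $\mathcal{A}^\tau_{\tau_0}$), that $x \mapsto T_{x,v}$ is continuous into $C([0,1],\Vert\cdot\Vert_\infty)$, and that $U_{\tau_0} = T_{\frac12,1-\frac{\tau_0}{2}}$ is precisely the upper envelope of this family.

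Since $\Omega^\tau_{\tau_0}$ is symmetric with respect to $t = \frac12$ and the reflection $A \mapsto A(1-\cdot)$ is an involution of $\mathcal{A}$ preserving Kendall's $\tau$ (it corresponds to transposing the copula), we may assume $t_0 \in [0,\frac12]$; a point with $t_0 > \frac12$ is then handled by applying the result to $(1-t_0,y_0) \in \Omega^\tau_{\tau_0}$ and reflecting the function obtained. We may also assume $\tau_0 \in (0,1)$, the cases $\tau_0\in\{0,1\}$ being handled separately below. For fixed $t_0 \in [0,\frac12]$ I would study the continuous map $g_{t_0}\colon [1-v,v] \to \mathbb{R}$, $g_{t_0}(x) := T_{x,v}(t_0)$, and identify its range with the fibre $[L_{\varphi_2^{-1}(\tau_0)}(t_0), U_{\tau_0}(t_0)]$ of $\Omega^\tau_{\tau_0}$ over $t_0$. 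Using the explicit piecewise-linear form of $T_{x,v}$: if $t_0 \le 1-v$ then $t_0 \le x$ for every admissible $x$, so $g_{t_0}(x) = 1 - \frac{(1-v)t_0}{x}$ is increasing with $g_{t_0}(1-v) = 1-t_0 = L_{\varphi_2^{-1}(\tau_0)}(t_0)$ and $g_{t_0}(v) = 1-\tau_0 t_0 = U_{\tau_0}(t_0)$; if $1-v < t_0 \le \frac12$ then $g_{t_0}$ strictly decreases on $[1-v,t_0]$ (where $g_{t_0}(x) = 1 - \frac{(1-v)(1-t_0)}{1-x}$) and strictly increases on $[t_0,v]$ (where $g_{t_0}(x) = 1 - \frac{(1-v)t_0}{x}$), with minimum $g_{t_0}(t_0) = v = L_{\varphi_2^{-1}(\tau_0)}(t_0)$ and maximum $\max\{g_{t_0}(1-v),g_{t_0}(v)\} = g_{t_0}(v) = 1-\tau_0 t_0 = U_{\tau_0}(t_0)$, the last equality using $\tau_0(1-t_0) \ge \tau_0 t_0$. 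In both cases the intermediate value theorem shows that $g_{t_0}$ attains every value $y_0$ in $[L_{\varphi_2^{-1}(\tau_0)}(t_0), U_{\tau_0}(t_0)]$, so choosing $x$ with $T_{x,v}(t_0) = y_0$ and setting $A := T_{x,v}$ finishes the argument. The degenerate endpoints are immediate: for $\tau_0 = 0$ one has $v=1$, all $T_{x,1}$ equal the constant function $1$, and $\Omega^\tau_0$ is the single segment $\{(t,1)\colon t\in[0,1]\}$; for $\tau_0 = 1$ one has $v=\frac12$, the only admissible function is $T_{1/2,1/2}(t) = \max\{t,1-t\}$, and $\Omega^\tau_1$ is exactly its graph.

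I expect the only point requiring care to be the range computation for $g_{t_0}$ in the non-monotone regime $1-v < t_0 \le \frac12$: one must verify that the minimum of $g_{t_0}$ is attained at $x=t_0$ with value equal to the flat part $\varphi_2^{-1}(\tau_0)$ of $L_{\varphi_2^{-1}(\tau_0)}$, and that the larger of the two endpoint values equals $U_{\tau_0}(t_0)$; both reduce to the already-recorded identities $\varphi_2(v) = \tau_0$ and $U_{\tau_0} = T_{\frac12,1-\frac{\tau_0}{2}}$. An alternative route, exactly parallel to the one used for Spearman's $\rho$, would interpolate instead within the family $(P_{h_{\tau_0}(y),y})_{y\in[\varphi_2^{-1}(\tau_0),1]}$ whose upper envelope is $U_{\tau_0}$; but since here $U_{\tau_0}$ is itself the triangular function $T_{\frac12,1-\frac{\tau_0}{2}}$, working with the $T_{x,v}$-family is the more economical choice.
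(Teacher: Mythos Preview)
Your argument is correct. The paper's own proof of this second assertion is a single sentence (``a direct consequence of the construction via the functions $T_{x,y}$ and $P_{x,y}$''), so your write-up is in fact considerably more explicit than the original. The one genuine difference is that you observe the triangular family $\{T_{x,v}:x\in[1-v,v]\}$ with $v=\varphi_2^{-1}(\tau_0)$ \emph{alone} sweeps out all of $\Omega^\tau_{\tau_0}$, whereas the paper invokes both the $T$- and $P$-families. Your observation hinges on the identity $T_{v,v}=P_{0,v}$ together with $T_{v,v}(t_0)=1-\tau_0 t_0=U_{\tau_0}(t_0)$ for $t_0\le\tfrac12$, which is special to the Kendall-$\tau$ case (in the Spearman-$\rho$ setting $U_{\rho_0}$ has a curved middle piece and the $T$-family by itself does not reach it). This is a neat economy: it avoids any reference to $h_{\tau_0}$ or the $P$-envelope and reduces the sharpness claim to a clean intermediate-value computation. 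The paper's route has the advantage of being literally parallel to the $\rho$ case, but yours is the shorter proof here.
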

\begin{proof}
	The first assertion has already been proved, the second one is a direct consequence of the construction via the functions $T_{x,y}$ and $P_{x,y}$. 
\end{proof}
\noindent Figure \ref{fig:tau.full} depicts the set $\Omega^\tau_{\tau_0}$ for some choices of $\tau_0 \in [0,1]$.



\clearpage
\section{Appendix}
\subsection{Complementary calculations for Section \ref{sec:rho.full}.}
Calculation of the upper envelope of the family $(P_{h_{\rho_{0}(y)},y})_{y \in [\varphi_1^{-1}(\rho_0),1]}$ with $\rho_0 \in (0,1)$: 
Define 
\begin{align*}
g_{\rho_0,t}(y)&:=P_{h_{\rho_{0}(y)},y}(t) \\
&=\frac{3t+\rho_0t+3y-3\rho_0y-18ty+2\rho_0ty+3y^2-3\rho_0y^2+15ty^2+\rho_0ty^2}{6-2\rho_0-15y-\rho_0y+15y^2+\rho_0y^2}
\end{align*}
for every $t\in(h_{\rho_{0}}(y),y)$. Then 
\begin{align*}
g_{\rho_0,t}'(y)&=\frac{3 - 3 \rho_0 - 18 t + 2\rho_0 t + 6 y - 6 \rho_0 y + 30 t y + 2 \rho_0 t y}{6 - 2 \rho_0 - 15 y - \rho_0 y + 15 y^2 + \rho_0 y^2} \\
& \vspace{-2mm} -\frac{(-15 -\rho_0 + 30 y + 2\rho_0 y) (3 t + \rho_0 t + 3 y - 3 \rho_0 y - 18 t y + 2 \rho_0 t y + 3 y^2 - 3 \rho_0 y^2 + 15 t y^2 + \rho_0 t y^2)}{(6 - 2 \rho_0 - 15 y - \rho_0 y + 15 y^2 + \rho_0 y^2)^2}.
\end{align*}
The solutions of $g_{\rho_0,t}'(y)=0$ are
$$\frac{-6 + 2 \rho_0 - 15 t -\rho_0 t \pm 6\sqrt{ 6-2\rho_0 - 15t(1-t) - \rho_{0}t(1-t)}}{-30 - 2\rho_0 + 15 t + \rho_0 t}.$$
Since $\frac{-6 + 2 \rho_0 - 15 t -\rho_0 t + 6\sqrt{6-2\rho_0 - 15t(1-t) - \rho_{0}t(1-t)}}{-30 - 2\rho_0 + 15 t + \rho_0 t}<\frac{1}{2}$ for all $t\in[0,1]$ choose 
\begin{align}
y_0=\frac{-6 + 2 \rho_0 - 15 t -\rho_0 t - 6\sqrt{6-2\rho_0 - 15t(1-t) - \rho_{0}t(1-t)}}{-30 - 2\rho_0 + 15 t + \rho_0 t}=:y_{\rho_{0}}(t)
\end{align}

\begin{lem}\label{equipoint}
$y_0\in[\varphi_1^{-1}(\rho_0),1]$ if and only if $t\in[\frac{3-\rho_{0}}{6+\rho_{0}},\frac{3+2\rho_{0}}{6+\rho_{0}}]$.
\end{lem}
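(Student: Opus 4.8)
The plan is to verify Lemma~\ref{equipoint} by tracking the critical point $y_0 = y_{\rho_0}(t)$ (the chosen root of $g_{\rho_0,t}'(y)=0$) as a function of $t$ and checking at which $t$ it enters and leaves the interval $[\varphi_1^{-1}(\rho_0),1]$. First I would observe that, because $P_{h_{\rho_0}(y),y}(t)$ is (as a function of $y$) smooth on the relevant range and the upper envelope is attained either at an interior critical point or at an endpoint $y\in\{\varphi_1^{-1}(\rho_0),1\}$, the claim about where $y_0$ lies in $[\varphi_1^{-1}(\rho_0),1]$ is exactly what pins down the three-piece description of $U_{\rho_0}$ in \eqref{eq:U}. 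So the content of the lemma is the pair of boundary identities: $y_{\rho_0}(t)=\varphi_1^{-1}(\rho_0)=\frac{3-\rho_0}{3+\rho_0}$ precisely at $t=\frac{3-\rho_0}{6+\rho_0}$, and $y_{\rho_0}(t)=1$ precisely at $t=\frac{3+2\rho_0}{6+\rho_0}$, together with monotonicity of $y_{\rho_0}$ in between.

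The first step is to plug $t=\frac{3-\rho_0}{6+\rho_0}$ into the formula
$$
y_{\rho_0}(t)=\frac{-6 + 2 \rho_0 - 15 t -\rho_0 t - 6\sqrt{6-2\rho_0 - 15t(1-t) - \rho_{0}t(1-t)}}{-30 - 2\rho_0 + 15 t + \rho_0 t}
$$
and check that the radicand simplifies to a perfect square in $\rho_0$, so the square root resolves rationally, and the whole expression collapses to $\frac{3-\rho_0}{3+\rho_0}$; this is a direct (if tedious) algebraic identity that I would carry out by clearing denominators. The analogous substitution $t=\frac{3+2\rho_0}{6+\rho_0}$ should likewise make the radicand a perfect square and yield $y_{\rho_0}(t)=1$. (One useful consistency check: the midpoint $t=\tfrac12$ should give $y_{\rho_0}(\tfrac12)$ somewhere strictly inside the interval, matching the fact that the middle branch of $U_{\rho_0}$ is active at $t=\tfrac12$.) Having both endpoints, the "only if'' direction follows once I show $t\mapsto y_{\rho_0}(t)$ is continuous and strictly monotone on $[\frac{3-\rho_0}{6+\rho_0},\frac{3+2\rho_0}{6+\rho_0}]$, which I would get by differentiating $y_{\rho_0}$ once and checking the sign is constant — or, more cheaply, by noting that on this $t$-range the radicand $6-2\rho_0-(15+\rho_0)t(1-t)$ stays nonnegative (it vanishes only at the symmetric points where $t(1-t)=\frac{6-2\rho_0}{15+\rho_0}$, which lie outside the open interval), so $y_{\rho_0}$ is real and smooth there, and its monotonicity is a single-variable calculus check.

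For the "if'' direction — that $t$ outside $[\frac{3-\rho_0}{6+\rho_0},\frac{3+2\rho_0}{6+\rho_0}]$ forces $y_0\notin[\varphi_1^{-1}(\rho_0),1]$ — I would use the same monotonicity: since $y_{\rho_0}$ is strictly monotone and equals the two endpoints of $[\varphi_1^{-1}(\rho_0),1]$ at the two endpoints of the $t$-interval, it must leave $[\varphi_1^{-1}(\rho_0),1]$ on either side (and one should also confirm the radicand stays nonnegative a bit beyond, or else argue that once $y_0$ leaves the interval the envelope is realized at an endpoint $y$, which is the desired conclusion anyway). The main obstacle I anticipate is purely computational: verifying that the radicand becomes a perfect square at $t=\frac{3\mp(\text{shift})\rho_0}{6+\rho_0}$ and that the resulting rational expression really does collapse to $\frac{3-\rho_0}{3+\rho_0}$ and $1$ respectively, without sign errors in the $\pm$ branch of the quadratic-formula root. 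A secondary subtlety is making sure the branch $y_0=y_{\rho_0}(t)$ (the "$-$'' root) is indeed the maximizer and not a minimizer of $g_{\rho_0,t}(\cdot)$ — this is handled by the already-stated observation that the "$+$'' root is always $<\tfrac12$ hence irrelevant, but I would add a one-line second-derivative (or boundary-comparison) remark to confirm it is a maximum.
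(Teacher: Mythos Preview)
Your plan is essentially the paper's proof: verify the two endpoint identities $y_{\rho_0}\bigl(\tfrac{3-\rho_0}{6+\rho_0}\bigr)=\varphi_1^{-1}(\rho_0)$ and $y_{\rho_0}\bigl(\tfrac{3+2\rho_0}{6+\rho_0}\bigr)=1$, then show $y_{\rho_0}$ is strictly increasing on the $t$-interval (the paper does this by checking $y_{\rho_0}'$ has no zero there and evaluating its sign at $t=\tfrac12$). Two small corrections are worth noting. First, your ``if''/``only if'' labels are swapped: monotonicity of $y_{\rho_0}$ on the closed $t$-interval together with the endpoint values gives the $(\Leftarrow)$ direction, not $(\Rightarrow)$. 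Second, for $(\Rightarrow)$ your idea of extending the monotonicity of $y_{\rho_0}$ beyond the $t$-interval is not quite complete as stated (strict monotonicity on $I_t$ alone says nothing about values of $y_{\rho_0}$ for $t\notin I_t$, and you would indeed have to control the radicand and the derivative on a larger domain); the paper sidesteps this cleanly by inverting algebraically to $t=t_{\rho_0}(y_0)=\frac{2(-3+\rho_0-6y_0+2\rho_0 y_0+15y_0^2+\rho_0 y_0^2)}{-21+\rho_0+30y_0+2\rho_0 y_0+15y_0^2+\rho_0 y_0^2}$, checking $t_{\rho_0}$ is continuous and strictly increasing on $[\varphi_1^{-1}(\rho_0),1]$ with the correct endpoint values, which immediately yields $t\in I_t$ whenever $y_0\in I_y$.
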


\begin{proof}
($\Leftarrow$) We have
$$y_{\rho_{0}}'(t)=\frac{9\left(-6+\rho_{0}(t-2)+15t+4\sqrt{6-2\rho_0 - 15t(1-t) - \rho_{0}t(1-t)}\right)}{(15+\rho_{0})(t-2)^2\sqrt{6-2\rho_0 - 15t(1-t) - \rho_{0}t(1-t)}}$$
and $6-2\rho_0 - 15t(1-t) - \rho_{0}t(1-t)>0$ for all $\rho_{0}<1$. A straightforward calculation shows $y_{\rho_{0}}(\frac{3-\rho_{0}}{6+\rho_{0}})=\frac{3-\rho_{0}}{3+\rho_{0}}=\varphi_1^{-1}(\rho_{0})$ and $y_{\rho_{0}}(\frac{3+2\rho_{0}}{6+\rho_{0}})=1$ and it suffices to show that  $y_{\rho_{0}}'(t)>0$ for all $t\in(\frac{3-\rho_{0}}{6+\rho_{0}},\frac{3+2\rho_{0}}{6+\rho_{0}})$. 
The condition $y_{\rho_{0}}'(t)=0$ is equivalent to  
\begin{align*}
0&=-6+\rho_{0}(t-2)+15t+4\sqrt{6-2\rho_0 - 15t(1-t) - \rho_{0}t(1-t)} \\
\frac{1}{4}\left(6-\rho_{0}(t-2)-15t\right)&=\sqrt{6-2\rho_0 - 15t(1-t) - \rho_{0}t(1-t)} \\
\frac{1}{16}\left(6-\rho_{0}(t-2)-15t\right)^2&=6-2\rho_0 - 15t(1-t) - \rho_{0}t(1-t) \\
0&=(\rho_{0}-1)(15+\rho_{0})(t-2)^2.
\end{align*}
Of the latter, $t=2\notin(\frac{3-\rho_{0}}{6+\rho_{0}},\frac{3+2\rho_{0}}{6+\rho_{0}})$ is the only solution, so $y_{\rho_{0}}'(t)\neq0$. Obviously $y_{\rho_{0}}'$ is continuous on $(0,1)$ and $y_{\rho_{0}}'(\frac{1}{2})=\frac{4(1-\rho_{0}+4\sqrt{1-\rho_{0}})}{(15+\rho_{0})\sqrt{1-\rho_{0}}}>0$. Hence $y_{\rho_{0}}'(t)>0$ for all $t\in(\frac{3-\rho_{0}}{6+\rho_{0}},\frac{3+2\rho_{0}}{6+\rho_{0}})$ and the result follows.

($\Rightarrow$) Solving $y_0=y_{\rho_{0}}(t)$ for $t$ we get $t=\frac{2(-3+\rho_{0}-6y_0+2\rho_{0}y_0+15y_0^2+\rho_{0}y_0^2)}{-21+\rho_{0}+30y_0+2\rho_{0}y_0+15y_0^2+\rho_{0}y_0^2}=:t_{\rho_{0}}(y_0)$. 
Straightforward calculations yield $t_{\rho_{0}}(\varphi_1^{-1}(\rho_{0}))=\frac{3-\rho_{0}}{6+\rho_{0}}$ and $t_{\rho_{0}}(1)=\frac{3+2\rho_{0}}{6+\rho_{0}}$. Moreover, $t_{\rho_{0}}$ is not continuous at $y_0=\frac{-15-\rho_{0}\pm6\sqrt{15+\rho_{0}}}{15+\rho_{0}}$. Since $y_0=\frac{-15-\rho_{0}-6\sqrt{15+\rho_{0}}}{15+\rho_{0}}<0$ and 
$\varphi_1^{-1}(\rho_{0})-\frac{-15-\rho_{0}+6\sqrt{15+\rho_{0}}}{15+\rho_{0}}=\frac{6}{3+\rho_{0}}-\frac{6}{\sqrt{15+\rho_{0}}}>0$ for all $\rho_{0}<1$, $t_{\rho_{0}}$ is continuous on $[\varphi_1^{-1}(\rho_{0}),1]$. 
Considering that 
$$
t_{\rho_{0}}'(y_0)=\frac{72(\rho_{0}(-2-y_0+y_0^2)+3(2-5y_0+y_0^2))}{(-21+\rho_{0}+30y_0+2\rho_{0}y_0+15y_0^2+\rho_{0}y_0^2)^2}>0
$$
holds for all $y_0\in(\varphi_1^{-1}(\rho_{0}),1)$, $t_{\rho_{0}}$ is increasing on $[\varphi_1^{-1}(\rho_{0}),1]$ and $t=t_{\rho_{0}}(y_0)\in[\frac{3-\rho_{0}}{6+\rho_{0}},\frac{3+2\rho_{0}}{6+\rho_{0}}]$.
\end{proof}

\noindent Define the function $S_{\rho_{0}}$ on the interval $[\frac{3-\rho_{0}}{6+\rho_{0}},\frac{3+2\rho_{0}}{6+\rho_{0}}]$ by
\begin{align}
S_{\rho_0}(t)&=g_{\rho_0,t}(y_0)=\frac{9-\rho_0+4\sqrt{6-2\rho_0 - 15t(1-t) - \rho_{0}t(1-t)}}{15+\rho_0}.\label{eq:up1} 
\end{align}
Considering 
$S_{\rho_0}''(t)=\frac{9(1-\rho_{0})}{(6-2\rho_0 - 15t(1-t) - \rho_{0}t(1-t))^{\frac{3}{2}}}>0$
for all  $t\in(\frac{3-\rho_{0}}{6+\rho_{0}},\frac{3+2\rho_{0}}{6+\rho_{0}})$, $S_{\rho_0}$ is convex. For the boundary points of $[\varphi_1^{-1}(\rho_0),1]$, we have 
\begin{align}
g_{\rho_{0},t}(\varphi_1^{-1}(\rho_0))&=\frac{-3+\rho_{0}+2\rho_{0}t}{-3+\rho_{0}} \label{eq:up2}\\
g_{\rho_{0},t}(1)&=\frac{-3+3\rho_{0}-2\rho_{0}t}{-3+\rho_{0}}. \label{eq:up3}
\end{align} 
Solving \eqref{eq:up1}$=$\eqref{eq:up3} with respect to $t$ yields $t=\frac{3+2\rho_{0}}{6+\rho_{0}}$. Moreover,
\begin{align*}
g_{\rho_{0},\frac{1}{2}}(1)&=\frac{-3+2\rho_{0}}{-3+\rho_{0}} \\
S_{\rho_{0}}(\tfrac{1}{2})&=\frac{9-\rho_{0}+6\sqrt{1-\rho_{0}}}{15+\rho_{0}}.
\end{align*}
Solving $g_{\rho_{0},\frac{1}{2}}(1)=S_{\rho_{0}}(\frac{1}{2})$, we get $\rho_{0}=0$ or $\rho_{0}=1$ and $g_{\frac{3}{4},\frac{1}{2}}(1)=\frac{2}{3}<\frac{5}{7}= S_{\frac{3}{4}}(\frac{1}{2})$. So   $g_{\rho_{0},\frac{1}{2}}(1)\leq S_{\rho_{0}}(\frac{1}{2})$ for every $\rho_{0}$. Since $S_{\rho_{0}}$ is convex, $g_{\rho_0,t}(1)$ is linear function w.r.t. $t$, and since there is only one point fulfilling $g_{\rho_{0},t}(1)= S_{\rho_{0}}(t)$, $g_{\rho_{0},t}(1)\leq S_{\rho_{0}}(t)$ holds for all $t\in[\frac{1}{2},\frac{3+2\rho_{0}}{6+\rho_{0}}]$. Analogously we get $g_{\rho_{0},t}(\varphi_1^{-1}(\rho_0))\leq S_{\rho_{0}}(t)$ for all $t\in[\frac{3-\rho_{0}}{6+\rho_{0}},\frac{1}{2}]$. Altogether, $y_0$ is the global maximum of the function $g_{\rho_{0},t}$ on the interval 
$[\varphi_1^{-1}(\rho_0),1]$ for every $t\in[\frac{3-\rho_{0}}{6+\rho_{0}},\frac{3+2\rho_{0}}{6+\rho_{0}}]$.



\begin{thebibliography}{00}
\bibitem{Ru}
 W. Rudin: \emph{Real and Complex Analysis}, McGraw-Hill International Editions, Singapore, 1987

\bibitem{TSFS} W. Trutschnig, M. Schreyer, J. Fern\'andez S\'anchez: Mass distributions of two-dimensional extreme-value copulas and related results, 
   \emph{Extremes} 19, 405--427 (2016) 

\end{thebibliography}
\end{document}